

\documentclass[12pt,oneside]{amsart}
\usepackage{xspace}
\usepackage{graphicx}
\usepackage{latexsym}
\usepackage{mathtools}
\usepackage{amsfonts,amsmath,amscd,amssymb}

\usepackage{hhline}
\usepackage{multirow}
\usepackage{framed}
\usepackage{mathrsfs}
\usepackage{tikz-cd}
\usepackage{enumitem}
\usepackage{hyperref}
\usepackage[normalem]{ulem}

\setlength{\textwidth}{\paperwidth}
\addtolength{\textwidth}{-2.7in}
\calclayout

\DeclareMathOperator{\eva}{ev}
\DeclareMathOperator{\id}{id}

\DeclareMathOperator{\Hom}{Hom}
\DeclareMathOperator{\Iso}{Iso}
\DeclareMathOperator{\Mon}{Mono}
\DeclareMathOperator{\HO}{Ho}

\DeclareMathOperator{\GL}{GL}
\DeclareMathOperator{\SL}{SL}
\DeclareMathOperator{\SO}{SO}
\DeclareMathOperator{\Sp}{Sp}

\DeclareMathOperator{\U}{U}
\DeclareMathOperator{\mO}{O}
\DeclareMathOperator{\Aut}{Aut}

\DeclareMathOperator{\ResOp}{\downarrow}
\DeclareMathOperator{\IndOp}{\uparrow}

\newcommand{\catname}[1]{{\normalfont\textbf{#1}}}
\newcommand{\res}[2]{\ResOp_{#1}^{#2}}
\newcommand{\ind}[2]{\IndOp_{#1}^{#2}}

\newcommand{\Mod}[1]{\catname{Mod$_{#1}$}}

\newcommand{\G}{\widehat{G}}

\newcommand{\R}{\mathbb{R}}

\newcommand{\C}{\mathbb{C}}

\newcommand{\fn}[3]{#1 : #2 \rightarrow #3}
\newcommand{\twopartdef}[4]
{
	\left\{
		\begin{array}{ll}
			#1 & \mbox{if } #2 \\
			#3 & \mbox{if } #4
		\end{array}
	\right.
}


\newcommand{\bC}{{\mathbb C}}

\newcommand{\bF}{{\mathbb F}}

\newcommand{\bH}{{\mathbb H}}

\newcommand{\bK}{{\mathbb K}}

\newcommand{\bR}{{\mathbb R}}

\newcommand{\sA}{{\mathcal A}}

\newcommand{\sC}{{\mathcal C}}

\newcommand{\sH}{{\mathcal H}}

\newcommand{\sL}{{\mathcal L}}

\newcommand{\sR}{{\mathcal R}}
\newcommand{\sT}{{\mathcal T}}

\newcommand{\Id}{{\ensuremath{\mathrm{Id}}}}

\newcommand{\ve}{\ensuremath{\mathbf{e}}\xspace}

\newcommand{\vg}{\ensuremath{\mathbf{g}}\xspace}

\newcommand{\vk}{\ensuremath{\mathbf{k}}\xspace}

\newcommand{\vv}{\ensuremath{\mathbf{v}}\xspace}
\newcommand{\vw}{\ensuremath{\mathbf{w}}\xspace}

\newcommand{\vz}{\ensuremath{\mathbf{z}}\xspace}

\newtheorem{thm}{Theorem}
\numberwithin{thm}{section}
\newtheorem{lem}[thm]{Lemma} 
\newtheorem{prop}[thm]{Proposition} 
\newtheorem{cor}[thm]{Corollary}

\theoremstyle{definition}
\newtheorem{defn}[thm]{Definition}

\theoremstyle{remark}

\begin{document}
\title[Real Representations]{Real Representations of $C_2$-Graded Groups: The Linear and Hermitian Theories}
\author{Dmitriy Rumynin}
\email{D.Rumynin@warwick.ac.uk}
\address{Mathematics Institute, University of Warwick, Coventry, CV4 7AL, UK
  \newline
\hspace*{0.31cm}  Associated member of Laboratory of Algebraic Geometry, National
Research University Higher School of Economics, Russia}
\author{James Taylor}
\email{james.h.a.taylor@outlook.com}
\address{Mathematical Institute, University of Oxford, Oxford, OX2 6GG, UK}
\date{September 29, 2020}
\subjclass[2010]{18D99, 20C99}
\keywords{Real representation, finite group, topological category, $\infty$-category}
\thanks{The first author was partially supported within the framework of the HSE University Basic Research Program and the Russian Academic Excellence Project `5--100'.}
\thanks{We are indebted to Matthew B. Young for useful conversations and interest in our work.  We would like to thank Karl-Hermann Neeb for bringing the work of Wigner to our attention. We are grateful to John Jones for tutoring us in Homotopic Topology.}

\begin{abstract}
  We study linear and hermitian representations of finite $C_2$-graded groups.
  We prove that the category of linear representations is equivalent to a category of antilinear representations as an $\infty$-category. 
  We also prove that the category of hermitian representations, as an $\infty$-category, is equivalent to a category of usual representations.
\end{abstract}

\maketitle


Real representations first appeared in Quantum Mechanics in the works of Wigner \cite{WIG}.
Independently, they were introduced by Atiyah and Segal \cite{EKC} and Karoubi \cite{KAR} in the context of equivariant KR-theory.
Over time they have been actively studied by many scientists with a range of backgrounds (cf. \cite{Bor,Dimmock,Dyson,Fok,NEEB,LTJ,ROZ,BR2,R2R}).
The present paper is a sequel to our study of antilinear representations \cite{AREP}.
Here we investigate {\em linear} and {\em hermitian} representations, introduced by Young \cite{R2R}.

A $C_2$-graded group is a pair of finite groups, $G \leq \G$, where $G$ is an index 2 subgroup of $\G$.
A {\em Real} representation of $G$ is a complex representation $(V, \rho)$ of $G$ together with ``an action'' of the other coset $\G\setminus G$ satisfying appropriate algebraic coherence conditions.
In the antilinear theory,  each element $\vw \in \G \setminus G$ acts by an antilinear operator, or simply a linear map $\fn{\rho(\vw)}{\overline{V}}{V}$.
In the linear theory,  $\vw \in \G \setminus G$ acts by a bilinear form, regarded as a linear map $\fn{\rho(\vw)}{V^*}{V}$.
Finally, in the hermitian theory,  an element $\vw \in \G \setminus G$ acts by a sesquilinear form, regarded as a linear map $\fn{\rho(\vw)}{\overline{V}^*}{V}$.

The goal of the present paper is to describe the linear and hermitian categories fully.
These categories are not $\bR$-linear. Instead they are topological.
Moreover, they are ``homotopically equivalent'' to some non-full subcategories of the categories of antilinear or usual representations.
Let us now provide a detailed description of the content of the present paper. 

We start by reminding the reader the basic notions of $\infty$-categories in Section~\ref{s1}.
Then we introduce all the categories that we study, and finish the section with the statement of the main result of this paper, Theorem~\ref{main_theorem}.

In section~\ref{s2} we prove the first two parts of the main theorem, describing the two versions of the linear theory.
The first statement reduces to homotopy equivalences between certain Lie groups and their maximal compact subgroups (see Table~\ref{table_i}).
Similarly, the second statement to homotopy equivalences between homogeneous spaces.

Section~\ref{s3} is devoted to the proof the last two parts of the main theorem, describing the hermitian theory.
The proofs are parallel to the first two parts with different Lie groups and homogeneous space appearing (see Table~\ref{table_iii}). 

Finally, in Section~\ref{s4} we discuss some generalisations, outlining directions for future research.

\section{Categories} \label{s1}
\subsection{$\infty$-Categories} \label{s1.1}
Let $\sT$ be the closed monoidal category of compactly generated weakly Hausdorff topological spaces together with its Quillen model structure \cite[App. A]{Sch} (cf. \cite{HJR}).
{\em A topological category} is a category $\sC$, enriched in $\sT$.

Given a topological space $Z\in\sT$, by $[\![ Z ]\!]$ we denote the corresponding object in the homotopy category $\HO(\sT)$.
The homotopy category $\HO(\sT)$ is closed monoidal.
By $[\![ \sC ]\!]$ we denote the category with the same objects as $\sC$ and new morphisms
$$
[\![ \sC ]\!] (X,Y) \coloneqq
[\![ \sC (X,Y) ]\!]. 
$$
Since the morphisms are not sets, it is not a category in the usual sense. Instead it is a category enriched in $\HO(\sT)$.

In this paper by {\em $\infty$-categories} we understand categories of the form $[\![ \sC ]\!]$, enriched in $\HO(\sT)$, coming from topological categories. A functor (or an equivalence) of $\infty$-categories is just a functor (an equivalence) of categories enriched in $\HO(\sT)$.
This rather restrictive view of {\em $\infty$-categories}, outlined by Lurie \cite[Def. 1.1.1.6]{Lur}, is sufficient for our ends.

Let $\sC$ be a topological category. By $\Mon(\sC)$ and $\Iso(\sC)$ we denote the monomorphism and isomorphism categories of $\sC$. They have the same objects as $\sC$ but fewer morphisms
\begin{equation} \label{hom_sets}
\begin{aligned}
\Mon (\sC ) (X,Y) &\coloneqq \vk (\{ f\in \sC (X,Y) \,\mid\, f \mbox{ is a monomorphism} \}), \\
\Iso (\sC ) (X,Y) &\coloneqq \vk (\{ f\in \sC (X,Y) \,\mid\, f \mbox{ is an isomorphism} \}).
\end{aligned}
\end{equation}
A subset of $\sC (X,Y)$, equipped with the subspace topology, is weakly Hausdorff \cite[Prop A.4]{Sch} but not necessarily compactly generated. Hence, we apply the kellification functor $\vk$ to the subspace: the closed subsets of $\vk (Z)$ are compactly closed subsets of $Z$, i.e., those subsets $A\subseteq Z$ that $f^{-1} (A)$ is closed in $K$ for any compact $K$ and any continuous map $f:K\rightarrow Z$ \cite[A.1]{Sch}.

Thus, both $\Mon (\sC )$ and $\Iso (\sC )$ are topological categories.
The category $\Iso (\sC )$ is often called {\em the core} of the category $\sC$.

\subsection{Modules} \label{s1.2}
For $A$ an associative algebra over $\bR$ or $\bC$, write
$\Mod{fd}(A)$ for its category of finite-dimensional modules.
Each hom-set in $\Mod{fd}(A)$ is a finite-dimensional vector space over $\bR$.
Considering it in its Euclidean topology yields a topological category structure on $\Mod{fd}(A)$.

By a $C_2$-graded group we understand an exact sequence of finite groups
$$1\rightarrow G \rightarrow \G \xrightarrow{\pi} C_2 =\{\pm 1\} \rightarrow 1\, . $$
An element $\vg\in\G$ is called even (odd) if $\pi (\vg) =1$ ($\pi (\vg) =-1$). Two associative algebras are related to it: the complex group algebra $\bC \G$ and the skew group algebra  $\bC\ast \G$, where the coset $\G\setminus G$ acts on $\C$ by complex conjugation. We study the corresponding topological categories:
$$
\sR (G)\coloneqq \Mod{fd}(\bC \G) \ \mbox{ and } \  \sA (G)\coloneqq \Mod{fd}(\bC\ast \G) \, . 
$$ 
The category of $\sR (G)$ is the category of representations of $\G$.
Following on from our previous work \cite{AREP}, we think of the $C_2$-graded group $\G$ as a {\em Real} structure on $G$ and then of $\sA (G)$ as the category of antilinear Real representations of $G$, which is the reason to keep $G$ but not $\G$ in the notation.  
Similarly to Section~\ref{s1.1}, we are interested in the categories of isomorphisms and monomorphisms of $\sA (G)$ and $\sR (G)$.
Since the original categories are abelian, the monomorphisms are precisely the injective maps and the isomorphisms are precisely the bijective maps.
Note that the kellification functor in~\eqref{hom_sets} does not change the topology for the four new categories.
Indeed, 
the hom-set $\Mon (\sR (G)) (X,Y)$ is open in $\sR (G) (X,Y)$, hence, first countable, while every first countable topological space is compactly generated \cite[A.2]{Sch}. Similarly for the other three topological categories.

\subsection{Hermitian Representations Over a Ring} \label{s1.3}
Let $\bK=(\bK, \iota)$ be a commutative ring with involution $\iota (a)=\overline{a}$, which is allowed to be trivial.
Let $\Mod{fgp}(\bK G)$ be the category of representations of $G$ over $\bK$: we define it as the full subcategory of $\Mod{}(\bK G)$ consisting of objects that are finitely generated projective $\bK$-modules.

We need adjectives describing sesquilinear forms (cf. \cite[2.7]{AREP}). Let $\vw\in\G\setminus G$. A sesquilinear form $B:V\times V \rightarrow \bK$ on a $V\in\Mod{fgp}(\bK G)$ is called 
\begin{equation} \label{invariant_form}
\begin{aligned}
\vw\mbox{-invariant if } B(\vg u, \vw \vg \vw^{-1} v) &= B(u,v) \mbox{ for all } \vg \in G, \; u,v\in V\, , \\
\vw\mbox{-skew-hermitian if } B(u, \vw^{2} v) &= -\overline{B(v,u)} \mbox{ for all }  u,v\in V\, , \\
\mbox{and } \vw\mbox{-hermitian if } B(u, \vw^{2} v) &= \overline{B(v,u)} \mbox{ for all }  u,v\in V\, .
\end{aligned}
\end{equation}
These properties do not depend on a particular choice of $\vw$: a $\vw$-invariant form is $\vv$-invariant for any $\vv\in\G\setminus G$, etc.
If the involution is known to be trivial, we routinely use the words $\vw$-symmetric and $\vw$-alternating instead of $\vw$-hermitian and $\vw$-skew-hermitian.

By $V^*$ we denote the dual module, by $\overline{V}$ -- the conjugate module.
Let
\[
\fn{\eva}{V}{V^{**}= \overline{\overline{V}^{*}}^{*}}, \; \eva_v(f) = f(v),
\]
be the canonical isomorphism of $V$ with its double dual $\bK$-module. For a $\bK$-linear map $\fn{f}{V}{W}$, we denote  the conjugate-transpose map $\fn{f^{\ast}=\overline{f}^*}{\overline{W}^*}{\overline{V}^*}$.
A convenient notation is 
$$
(\prescript{\epsilon}{}{V}, \prescript{\epsilon}{}{f})\coloneqq \twopartdef{(V,f)}{\epsilon = 1,}{(\overline{V}^*,\overline{f}^*)}{\epsilon = -1,} \text{  and	 } \delta_{x,y,-1} \coloneqq \twopartdef{1}{x=y=-1,}{0}{$otherwise$.}
$$
\begin{defn}
A \emph{hermitian $\bK$-representation} of a $C_2$-graded group $\G$ (or a Real group $G$) is a finitely generated projective $\bK$-module $V$ with invertible linear maps $\fn{\rho(\vz)}{\prescript{\pi(\vz)}{}{V}}{V}$ for all $\vz \in \hat{G}$, such that $\rho(\ve ) = \id_V$, and
\begin{equation}\label{coher1}
\rho(\vz_2\vz_1) = \rho(\vz_2) \circ \prescript{\pi(\vz_2)}{}{\rho(\vz_1)}^{\pi(\vz_2)} \circ \eva^{\delta_{\pi(\vz_1),\pi(\vz_2),-1}}.
\end{equation}
\end{defn}

In other words, each odd element $\vw$ defines a non-degenerate sesquilinear form
$$
\fn{B_\vw}{V \times V}{\bK}, \  B_\vw(u,v) := \rho(\vw)^{-1}(v)(u)\, .
$$
If $V$ is a free $\bK$-module, we can choose a basis of $V$ and write the bilinear forms as matrices:
$$ B_{\vw}(u,v) = \underline{v}^T B(\vw) \underline{u} \ \mbox{ for all } \ u,v\in V\, ,$$
where $\underline{u}$ is the coordinate column of $u$.
We can also write the linear maps $\rho (\vz)$ as matrices $M(\vz)$, using the dual basis of $V^*$ for odd $\vw$.
Note that $M(\vw) = B(\vw)^{\flat}$ where $A^{\flat} \coloneqq (A^{-1})^*$ for an invertible matrix $A$ . 
It is instructive to write \eqref{coher1} as four different conditions depending on the parity of elements.
The condition~\eqref{coher1} for two even elements means that $(V,\rho)$ is a representation of $G$.
The other three corners tell us that
\begin{center}
\begin{tabular}{ c|c|c } 
  even-odd & odd-even & odd-odd \\
  $B_{\vg \vw}(u,v) = B_\vw(u,\vg^{-1}v)$ & $B_{\vw \vg}(u,v) = B_\vw(\vg u,v)$ & $\overline{B_{\vw_1}(u,v)} = B_{\vw_2}((\vw_1\vw_2)^{-1}v,u)$ \\
  $B(\vg\vw)=M(\vg)^{\flat} B(\vw)$ & $B(\vw\vg)=B(\vw) M(\vg)$ & $M(\vw_1\vw_2)^{\flat}=B(\vw_1) B(\vw_2)^{\flat}$ \\
  $M(\vg\vw)=M(\vg) M(\vw)$ & $M(\vw\vg)=M(\vw) M(\vg)^{\flat}$ & $M(\vw_1\vw_2)=M(\vw_1) M(\vw_2)^{\flat}$ \\ 
\end{tabular}
\end{center}
for all $\vg \in G, \vw_1,\vw_2, \vw \in \G \setminus G$, $u,v \in V$. The following useful reformulation of these conditions is easy to prove:
\begin{lem} \label{lemma_3corners}
  Suppose that $(V, \rho)\in\Mod{fgp}(\bK G)$, and we are given a non-degenerate sesquilinear form $B_\vw$ for each $\vw\in\G\setminus G$. Define $\rho(\vw) : \overline{V}^* \rightarrow V$ for $\vw\in\G\setminus G$ by,
  \[
  \rho(\vw)^{-1}(v)(u) := B_\vw(u,v).
  \]
Then these data extend $V$ to a hermitian $\bK$-representation of $\G$ if and only if
  each $B_\vw$ is $\vw$-invariant and $\vw$-hermitian, and $B_{\vw_1}(u,v) = B_{\vw_2}(u, \vw_2 \vw_1^{-1} v)$ for all $\vw_1,\vw_2\in\G\setminus G$.
\end{lem}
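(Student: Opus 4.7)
The plan is to exploit the explicit unpacking of the coherence condition~\eqref{coher1} already recorded in the three-column table immediately before the lemma. Since $(V,\rho)$ is assumed to be a $G$-representation, the even-even case of~\eqref{coher1} holds automatically; extending $(V,\rho)$ to a hermitian representation is therefore equivalent to the simultaneous validity of the three remaining corners (even-odd, odd-even, odd-odd) of that table, expressed in terms of the sesquilinear forms $B_\vw$ via $B_\vw(u,v) = \rho(\vw)^{-1}(v)(u)$. The lemma amounts to showing that these three corner identities are together equivalent to the three hypotheses of the lemma ($\vw$-invariance, the $\vw$-hermitian property, and the compatibility $B_{\vw_1}(u,v) = B_{\vw_2}(u, \vw_2 \vw_1^{-1} v)$), and I would attack this by a pair of symbolic identity-chases, one in each direction.

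For the forward direction, the compatibility condition is immediate from the even-odd identity specialised to the $G$-element $\vw_2 \vw_1^{-1}$ with odd factor $\vw_1$. The $\vw$-invariance condition is then obtained by chaining the odd-even identity (which pulls $\vg$ out of the first slot of $B_\vw$) with the even-odd identity applied to the conjugate $\vw \vg \vw^{-1} \in G$; the two steps combine, after a change of variable in the second argument, to give the invariance equation. Finally, the $\vw$-hermitian property emerges from the odd-odd identity specialised to $\vw_1 = \vw_2 = \vw$, combined with the just-established invariance applied to $\vg = \vw^2 \in G$, which moves the factor of $\vw^2$ between the two slots of $B_\vw$.

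For the backward direction, I would recover the three corner identities one by one. The even-odd case follows from a single application of compatibility with $\vw_1 = \vg \vw$ and $\vw_2 = \vw$; the odd-even case from compatibility with $\vw_1 = \vw \vg$ and $\vw_2 = \vw$ followed by a use of invariance to transfer $\vg$ between the slots. The odd-odd case is the most intricate: I would use compatibility to reduce both sides to a single reference form $B_{\vw_1}$, realign the arguments by invoking invariance with $\vg = \vw_1 \vw_2 \in G$, and finish by applying the $\vw_1$-hermitian property after swapping the roles of $u$ and $v$ to produce the complex conjugate. This last case is where all three hypotheses must be used simultaneously, and so is the main bookkeeping obstacle; the only nontrivial aspect of the argument throughout is tracking how conjugation by $\vw$ acts on elements of $G$ when those elements are moved between the arguments of $B_\vw$.
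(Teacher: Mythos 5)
Your proposal is correct and follows exactly the route the paper intends: the lemma is just the translation of the three odd corners of the coherence condition~\eqref{coher1}, as tabulated before the statement, into the conditions of $\vw$-invariance, the $\vw$-hermitian property, and the compatibility $B_{\vw_1}(u,v)=B_{\vw_2}(u,\vw_2\vw_1^{-1}v)$, with non-degeneracy supplying invertibility of $\rho(\vw)$. Your identity-chases in both directions (in particular the odd-odd corner via compatibility, invariance with $\vg=\vw_1\vw_2$, and the hermitian property) check out.
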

  
There are two competing notions of a homomorphism.
Let $(V,\rho)$, $(W, \mu)$ be two hermitian $\bK$-representations of $G$.
By  \emph{a homomorphism} of hermitian representations we understand a homomorphism of $\bK G$-modules $\fn{f}{V}{W}$ that preserves all the forms $B_\vw$:
$$
B_{\vw}^V(u,v) = B_{\vw}^W (f(u),f(v)) \ \mbox{ for all } \ u,v\in V, \ \vw \in \G\setminus G \, .
$$
Thanks to the odd-odd corner condition, this is equivalent to preserving one of the forms $B_\vw$.
Since the forms $B_\vw$ are non-degenerate, all homomorphisms are injective. We denote the resulting category $\sH_{(\bK,\iota)} (G)$. 

Now by \emph{a strong homomorphism} we understand  a $\bK$-linear map $\fn{f}{V}{W}$ such that the squares
\[
\begin{tikzcd}
    V \arrow{r}{f} \arrow[swap]{d}{\rho(\vg)} & W \arrow{d}{\mu(\vg)} \\
    V \arrow[swap]{r}{f} & W
  \end{tikzcd} \qquad \qquad
  \begin{tikzcd}
    \overline{W}^* \arrow{r}{f^*} \arrow[swap]{d}{\mu(\vz)} & \overline{V}^* \arrow{d}{\rho(\vz)} \\
    W & V \arrow{l}{f}
  \end{tikzcd}
\]
commute for all $\vg \in G$ and $\vz \in \G \setminus G$.
Clearly, a homomorphism $f$ is a strong homomorphism if and only if $f$ preserves the dual forms $B_\vw^*$. 
This is also equivalent to $f$ being bijective.
This is further equivalent to $f$ being an isomorphism. Thus, the category of hermitian $\bK$-representations with strong isomorphisms is just $\Iso (\sH_{(\bK,\iota)} (G))$. 

Observe that there is a natural notion of ``direct sum'' $(V,\rho)\oplus (W,\mu)$ of hermitian $\bK$-representations: it is a direct sum of $\bK G$-modules with obvious extension of the forms $B_\vw (v_1+w_1,v_2+w_2) \coloneqq B_\vw (v_1, v_2) + B_\vw (w_1,w_2)$. The quotation marks are justified by the limited categorical properties of this construction: ``the direct sum'' is a coproduct (but not a product) in $\sH_{(\bK,\iota)} (G)$ and has no categorical properties in $\Iso (\sH_{(\bK,\iota)} (G))$. 

\subsection{Maschke's Theorem}
Suppose that $\bK$ is a field. The involution $\iota$ could still be trivial or non-trivial.
This allows to take orthogonal complements $V = W \oplus W^{\perp}$ on finite-dimensional modules with hermitian forms as soon as the restriction of the form to $W$ is non-degenerate.

Consider $(V,\rho) \in \sH_{(\bK,\iota)} (G)$. We call a vector subspace $W\subseteq V$ a \emph{subrepresentation}, if it is a $\bK G$-submodule and all restrictions $B^V_\vw|_W$ are non-degenerate. These restrictions define a structure of hermitian representation on $W$ such that the embedding $W\hookrightarrow V$ is a morphism in $\sH_{(\bK,\iota)} (G)$.

\begin{prop} \label{Maschke_th} (Maschke's Theorem) 
  Suppose that $\bK$ is a field and $(V,\rho) \in \sH_{(\bK,\iota)} (G)$.
  If $(W,\mu)$ is a subrepresentation of $V$, then the right orthogonal complement $W^{\perp}$ under $B^V_\vw$ is a Hermitian subrepresentation with $W^{\perp} \oplus W = V$. Moreover, $W^{\perp}$ does not depend on the choice of odd $\vw$. 
\end{prop}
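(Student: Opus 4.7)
The plan is to proceed in three steps: establish the linear-algebraic decomposition $V = W \oplus W^\perp$, then verify $G$-stability of $W^\perp$ and derive independence of $\vw$ from it, and finally equip $W^\perp$ with the structure of a hermitian subrepresentation via Lemma~\ref{lemma_3corners}.

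For the first step, fix an odd $\vw$. Since $B^V_\vw|_W$ is non-degenerate, $W\cap W^\perp = 0$; a dimension count, using non-degeneracy of $B^V_\vw$ on $V$, gives $\dim W^\perp = \dim V - \dim W$, hence $V = W\oplus W^\perp$. Non-degeneracy of $B^V_\vw|_{W^\perp}$ is then routine: any $v$ in its right radical is annihilated on the left by every element of $W$ and of $W^\perp$, hence by all of $V$, so $v = 0$.

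For the second step, given $\vg \in G$, $w \in W$ and $v \in W^\perp$, $\vw$-invariance yields
$$B^V_\vw(w, \vg v) = B^V_\vw(\vw\vg^{-1}\vw^{-1} w, v) = 0,$$
since $\vw\vg^{-1}\vw^{-1} \in G$ stabilises $W$; so $W^\perp$ is a $\bK G$-submodule. Next, for any odd $\vv$, setting $\vg = \vw\vv^{-1} \in G$, the compatibility relation in Lemma~\ref{lemma_3corners} gives $B^V_\vv(w, v) = B^V_\vw(w, \vg v)$, so the right orthogonal complement of $W$ under $B^V_\vv$ equals $\vg^{-1} W^\perp = W^\perp$ by $G$-stability. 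This simultaneously proves independence of $\vw$ and, by the first step applied to $\vv$ instead, the non-degeneracy of every restriction $B^V_\vv|_{W^\perp}$.

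For the final step, the restricted forms $B^V_\vv|_{W^\perp \times W^\perp}$ inherit $\vv$-invariance, $\vv$-hermiticity, and the cross-compatibility relation from their ambient counterparts on $V$, so by Lemma~\ref{lemma_3corners} they extend $W^\perp$ to a hermitian $\bK$-representation, and the inclusion $W^\perp \hookrightarrow V$ is tautologically a morphism in $\sH_{(\bK,\iota)}(G)$. I do not anticipate a serious obstacle: the argument is a direct adaptation of the classical Maschke proof, with the $C_2$-grading handled entirely by $\vw$-invariance (for $G$-stability) and by the cross-compatibility of the forms $B^V_\vv$ (for independence of $\vw$).
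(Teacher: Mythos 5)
Your argument is correct and takes essentially the same route as the paper's proof: $W\cap W^{\perp}=0$ together with non-degeneracy of $B^V_\vw$ gives $V=W\oplus W^{\perp}$, $\vw$-invariance (the even-odd/odd-even corners of \eqref{coher1}) gives $G$-stability of $W^{\perp}$, the odd-odd compatibility gives independence of the odd element, and the restricted forms equip $W^{\perp}$ with a hermitian structure via Lemma~\ref{lemma_3corners}. One cosmetic remark: a direct application of the stated $\vw$-invariance yields $B^V_\vw(w,\vg v)=B^V_\vw(\vw^{-1}\vg^{-1}\vw\, w,v)$ rather than your $\vw\vg^{-1}\vw^{-1}$, but since both conjugates lie in $G$ and stabilise $W$ (and invariance is independent of the chosen odd element), the argument is unaffected.
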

\begin{proof}
  Since $B^V_{\vw}|_{W} = B^W_{\vw}$, $W \cap W^{\perp} = 0$. By non-degeneracy of $B^V_\vw$, $W \oplus W^{\perp} = V$.

  The odd-odd version of \eqref{coher1} implies that $W^{\perp}$ is independent of $\vw$. 
  The even-odd version of \eqref{coher1} implies that $W^{\perp}$ is a $\bK G$-submodule.
  Finally, we can equip $W^{\perp}$ with the action of odd elements by $B^{W^{\perp}}_{\vw}\coloneqq B^V_{\vw}|_{W^\perp}$.
\end{proof}

We say that $V \in \sH_{(\bK,\iota)} (G)$ is irreducible if $V\neq 0$ and $0$, $V$ are the only subrepresentations of $V$. 
\begin{cor} (Krull-Remak-Schmidt Theorem) \label{Maschke_cor}
Every  $V \in \sH_{(\bK,\iota)} (G)$ decomposes as a finite direct sum of irreducible hermitian representations in a unique way up to permutation and isomorphism.
\end{cor}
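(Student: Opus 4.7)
The proposition has two parts—existence and uniqueness—which I would treat separately, inducting on $\dim_\bK V$ in both.

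For existence, induct on $\dim_\bK V$. If $V$ is zero or irreducible, the statement is trivial. Otherwise, pick any proper non-zero subrepresentation $W \subsetneq V$; Proposition~\ref{Maschke_th} furnishes an orthogonal decomposition $V = W \oplus W^\perp$ into hermitian subrepresentations of strictly smaller dimension, and the inductive hypothesis applied to each summand yields the required decomposition of $V$ into irreducibles.

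For uniqueness, the main preparatory ingredient is a Schur-type lemma in $\sH_{(\bK,\iota)}(G)$: any non-zero morphism $f: U \to U'$ between irreducibles is a strong isomorphism. The argument is immediate: every morphism in $\sH_{(\bK,\iota)}(G)$ is injective, so the image $f(U)$ is a $\bK G$-submodule of $U'$; since $f$ preserves the hermitian form and $B^{U}_\vw$ is non-degenerate, $B^{U'}_\vw$ restricts non-degenerately to $f(U)$, making $f(U)$ a subrepresentation of $U'$; irreducibility forces $f(U) = U'$, so $f$ is bijective. With this in hand, induct on $\dim_\bK V$: given $V = V_1 \oplus \cdots \oplus V_n = W_1 \oplus \cdots \oplus W_m$ into irreducibles, it suffices to produce an index $j$ with a hermitian isomorphism $V_1 \cong W_j$, since Proposition~\ref{Maschke_th} then gives orthogonal decompositions of $V_1^\perp$ with one fewer summand in each, and induction concludes.

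The matching step proceeds as follows. The orthogonal projections $\pi_j: V \to W_j$ from the second decomposition are $\bK G$-linear (both $W_j$ and its complement $W_j^\perp = \bigoplus_{k \neq j} W_k$ are $G$-stable since the decomposition is both a $\bK G$-module decomposition and orthogonal for $B^V_\vw$). At least one restriction $\phi_j \coloneqq \pi_j|_{V_1}: V_1 \to W_j$ is non-zero, and the orthogonality identity
\[
B^{V_1}_\vw(u,v) \;=\; \sum_{j} B^{W_j}_\vw(\phi_j u, \phi_j v)
\]
expresses the non-degenerate form on $V_1$ as a sum of pullbacks of $G$-invariant hermitian forms along the $\bK G$-linear maps $\phi_j$. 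A Schur-type analysis on the space of $G$-invariant $\vw$-hermitian forms on $V_1$ (viewed as a bimodule over $\End_{\bK G}(V_1)$ with the adjoint involution) then extracts one $\phi_j$ which, after an eventual rescaling, is form-preserving, hence by Schur a strong isomorphism $V_1 \cong W_j$.

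The main obstacle is precisely this matching step: a priori each $\phi_j$ is only $\bK G$-linear and need not preserve the hermitian forms, so upgrading it to a morphism in $\sH_{(\bK,\iota)}(G)$ requires using the irreducibility of $V_1$ together with the sum-of-pullbacks identity to rule out ``spurious'' partial contributions and pin down one $\phi_j$ whose pullback recovers $B^{V_1}_\vw$ up to an invertible scalar.
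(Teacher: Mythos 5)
Your skeleton is essentially the paper's: Maschke's Theorem for existence, and for uniqueness an induction that matches $V_1$ with some $W_j$ through the orthogonal projections (the paper simply asserts that one of the composites $V_1 \hookrightarrow V \twoheadrightarrow W_j$ is an isomorphism in $\sH_{(\bK,\iota)}(G)$). The genuine gap is the matching step itself, which you rightly call the main obstacle but then only gesture at: ``a Schur-type analysis \ldots extracts one $\phi_j$ which, after an eventual rescaling, is form-preserving'' is a hope, not an argument, and the mechanism you name cannot work as stated. Replacing $\phi_j$ by $r\phi_j$ multiplies the pulled-back form by $r\overline{r}$, which is a \emph{positive real} number when $(\bK,\iota)=(\bC,\mathrm{conj})$ and a \emph{square} when $\iota$ is trivial, so a nonzero $\phi_j$ need not be normalizable to an isometry. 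Concretely, over $(\bC,\mathrm{conj})$ with $V_1$ of type (1) of Proposition~\ref{irrH}, the space of $G$-invariant $\vw$-hermitian forms on $V_1$ is $\bR B^{V_1}_\vw$, so $B^{W_j}_\vw(\phi_j u,\phi_j v)=\lambda_j B^{V_1}_\vw(u,v)$ with $\lambda_j\in\bR$; a bijective $\phi_j$ with $\lambda_j<0$ identifies $W_j$ with $(V_1,-B_\vw)$, which by Corollary~\ref{determinetypeH} is \emph{not} isomorphic to $V_1$. What closes the argument is the sign observation that $\sum_j\lambda_j=1$ forces some $\lambda_j>0$, after which rescaling by $\lambda_j^{-1/2}$ does give a morphism, hence (by your Schur lemma) an isomorphism; your proposal never isolates this, nor the analogous analysis in the other types, so the induction base is unproved.

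The gap is substantive rather than cosmetic, because the step hinges on which scalars of $\bK$ are of the form $r\overline{r}$. For instance with $\bK=\bQ$, trivial involution and $G=1\le\G=C_2$, hermitian representations are just quadratic spaces, and $\langle 1\rangle\perp\langle 1\rangle\cong\langle 2\rangle\perp\langle 2\rangle$ while $\langle 1\rangle\not\cong\langle 2\rangle$; so ``some $\phi_j$ becomes form-preserving after rescaling'' genuinely fails in the stated generality, and any correct completion must use positivity/square-class input of the kind available for $\bK=\bC$ (which is the only case the paper later uses). A secondary point: even once $V_1\cong W_j$ is known, your induction step as written (``Proposition~\ref{Maschke_th} then gives orthogonal decompositions of $V_1^\perp$ with one fewer summand'') is not right, since $V_1^\perp=V_2\oplus\cdots\oplus V_n$ does not literally decompose into the $W_k$; one needs an isometry $V_1^\perp\cong W_j^\perp$ (a Witt-cancellation-type statement, which the paper extracts from the specific isomorphism produced), and this deserves at least a sentence.
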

\begin{proof}
  The decomposition easily follows from Proposition~\ref{Maschke_th}.

  Suppose that $V=V_1 \oplus \ldots \oplus V_m=W_1 \oplus \ldots \oplus W_n$ are two decompositions.
Uniqueness is proved by induction on $m$. 
One of the maps $V_1 \hookrightarrow V \twoheadrightarrow W_j$ must be an isomorphism in $\sH_{(\bK,\iota)} (G)$.
This is the induction base.
Furthermore, this gives an isomorphism between $V_1^{\perp}$ and $W_j^{\perp}$, which is the induction step.
\end{proof}

\subsection{Statement of the Main Theorem} \label{s1.4}
The most interesting field for us are the complex numbers $\C$. It has two natural involutions.
The first involution is complex conjugation. The corresponding category is denoted $\sH (G) \coloneqq \sH_{(\C,\iota)} (G)$.
We call these representations hermitian or simply H-representations.
We denote the hom-sets in this category by $\Hom_H(V,W)\coloneqq \sH (G) (V,W)$ and $\Aut_H(V)\coloneqq \sH (G) (V,V)$.

The second involution is trivial. The corresponding category is denoted $\sL (G) \coloneqq \sH_{(\C,\Id)} (G)$.
We call these representations linear or simply L-representations.
We denote the hom-sets in this category by $\Hom_L(V,W)\coloneqq \sL (G) (V,W)$ and $\Aut_L(V)\coloneqq \sL (G) (V,V)$.

\begin{thm}\label{main_theorem} 
   Let $\G$ be a finite $C_2$-graded group. The following pairs of $\infty$-categories are equivalent:
  \begin{itemize}
  \item[(i)] $[\![\Iso (\sA (G))]\!]$ and $[\![\Iso (\sL (G))]\!]$, 
  \item[(ii)] $[\![\Mon (\sA (G))]\!]$ and $[\![\sL (G)]\!]$, 
  \item[(iii)] $[\![\Iso (\sR (G))]\!]$ and $[\![\Iso (\sH (G))]\!]$, 
  \item[(iv)] $[\![\Mon (\sR (G))]\!]$ and $[\![\sH (G)]\!]$.
    \end{itemize}
 \end{thm}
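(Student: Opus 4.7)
My plan is to establish each of (i)--(iv) by constructing explicit functors of topological categories and then checking, directly on morphism spaces, that they induce weak homotopy equivalences together with essential surjectivity on isomorphism classes. The four statements split naturally into the linear pair (i)--(ii) and the hermitian pair (iii)--(iv), which admit parallel treatments differing only in the specific Lie groups that appear.

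The first reduction is to indecomposables. Maschke's Theorem (Proposition~\ref{Maschke_th}) and its Krull--Remak--Schmidt consequence (Corollary~\ref{Maschke_cor}), together with the analogous decomposition results for $\sA(G)$ and $\sR(G)$ from \cite{AREP}, write every object in each of the eight categories as a direct sum of indecomposables. Since $\Iso$ and $\Mon$ respect this additive structure --- their hom-spaces decompose as products of hom-spaces between indecomposables, in the spirit of matrix blocks --- it suffices to verify the equivalences on the pieces indexed by an indecomposable object. On such a piece, the iso category contributes a classifying space $B\Aut(V)$, while the mono category contributes a homogeneous space for such an automorphism group.

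Next, using a Schur-type analysis of indecomposables (as classified by Wigner and extended by Young), one identifies the relevant automorphism groups on the $\sA$- and $\sR$-side as real or complex general linear groups --- specifically $\GL_n(\bR)$, $\GL_n(\bC)$, or $\GL_n(\bH)$ --- while imposing a compatible L- or H-structure cuts these down to orthogonal, symplectic, or unitary subgroups. These pairings are what Tables~\ref{table_i} and~\ref{table_iii} are set up to record. The required weak equivalences are then the classical facts that a real reductive Lie group deformation-retracts onto a maximal compact subgroup (via Cartan decomposition) and that the associated homogeneous spaces retract in the same way.

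What remains is to assemble these pointwise equivalences into a functor of topological categories. The natural device is to fix, on each object, a $\G$-invariant positive-definite hermitian inner product; the space of such products is a non-empty convex subset of a vector space, hence contractible, and allows one to interconvert antilinear maps, bilinear forms, and sesquilinear forms via the induced musical isomorphism. The main obstacle will be promoting this object-wise recipe to a continuous functor between the topological categories and checking that the induced functor on $[\![-]\!]$ is independent of the auxiliary choice; once this coherence work is separated cleanly from the underlying Lie-theoretic retractions, the remaining verifications become routine bookkeeping governed by the two tables.
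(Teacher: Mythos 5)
Your overall strategy (reduce to isotypic blocks, mediate between the antilinear/ordinary and L/H structures by fixing $\G$-invariant hermitian inner products, then invoke maximal-compact retractions and their homogeneous-space analogues) is the strategy of the paper. But there is a concrete gap in your reduction step for parts (iii)--(iv). You claim it suffices to check the equivalences ``on the pieces indexed by an indecomposable object'' because hom-spaces decompose as products over indecomposables. In the hermitian theory this is false: by Proposition~\ref{irrH} and Corollary~\ref{determinetypeH}, the two non-isomorphic indecomposables $(V,B_\vw)$ and $(V,-B_\vw)$ have the \emph{same} underlying $\C G$-module, so morphisms in $\sH(G)$ between sums involving both do not split along these pieces. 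The correct blocks are $U=nV_i\oplus m(V_i\otimes\pi)$, and this is exactly where the indefinite unitary groups appear: $\Aut_H(U)=\U_{n,m}(\bC)$, which is \emph{not} the product $\U_n(\bC)\times\U_m(\bC)$ of the automorphism groups of the pieces, while on the $\sR$-side the corresponding block does split as $\GL_n(\bC)\times\GL_m(\bC)$ (since $V_i\not\cong V_i\otimes\pi$ as $\C\G$-modules). Your piecewise reduction would never see the groups $\U_{n,m}(\bC)$ at all, yet your own appeal to Table~\ref{table_iii} requires them; moreover the matching of indecomposables across the two sides is only canonical up to the sign torsor $(V,\pm B_\vw)\leftrightarrow\{V_i,V_i\otimes\pi\}$, so ``match indecomposables and compare automorphisms'' is not well posed there. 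The fix is to decompose hom-spaces by isotype of the underlying $\C G$-module (equivalently by the pairs $\{V_i,V_i\otimes\pi\}$ and the $U_i$), as in Section~\ref{proof_part_3}, and then the relevant comparison is $\GL_n(\bC)\times\GL_m(\bC)\hookleftarrow \U_n(\bC)\times\U_m(\bC)\hookrightarrow \U_{n,m}(\bC)$, both inclusions of a common maximal compact.

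Two further points of imprecision, both repaired by the same device. First, the L/H automorphism groups are not subgroups of the A/R ones (e.g.\ $\Aut_L(nV)=\mO_n(\bC)\not\subseteq\GL_n(\bR)=\Aut_A(nV)$ in type $\bR$, and $\U_{n,m}(\bC)\not\subseteq\GL_n(\bC)\times\GL_m(\bC)$), so ``imposing the L- or H-structure cuts these down to subgroups'' is not literally available; what is true is that the morphisms preserving \emph{both} structures (equivalently, the chosen invariant hermitian metric) form a compact group sitting as a maximal compact inside each side. Second, the ``main obstacle'' you defer --- promoting the object-wise recipe to a continuous functor and checking independence of the choice of inner products --- is precisely what the paper avoids by not constructing a direct functor at all: it builds an intermediate topological category ($\sA^\ast(G)$, resp.\ $\sR^\ast(G)$) on a skeleton with metric-preserving morphisms and two outward functors as in \eqref{functors_AL} and \eqref{functors_RH}, each essentially surjective and a homotopy equivalence on hom-spaces; the desired equivalence of $\infty$-categories is then the composite with an enriched quasi-inverse, and no choice-independence statement is needed. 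Also note that in the paper's framework the iso-categories are compared through their hom-spaces (automorphism groups and torsors), not through classifying spaces $B\Aut(V)$; the latter framing is neither needed nor quite what Theorem~\ref{main_theorem} asserts.
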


\section{The Linear Theory} \label{s2}
\subsection{Structure of L-Representations}
Let $V$ be an irreducible L-representation. Each odd element $\vw$ yields an isomorphism of $\C G$-modules $\fn{\rho(\vw)}{V^*}{\vw \cdot V}$, where $\vw \cdot V$ is the vector space $V$ with twisted $G$ action, $\vg * v = (\vw \vg \vw^{-1})v$. It follows that the character $\chi$ of the underlying $\C G$-module $V$ satisfies $\overline{\chi} = \vw \cdot \chi$, where similarly, $(\vw \cdot \chi)(\vg) = \chi(\vw \vg \vw^{-1})$. The following result describes the structure of such $V$.

\begin{prop}\label{irrL}
One of the following mutually exclusive statements holds for an irreducible L-representation $V$.
\begin{itemize}
\item[(1)]
  $V\res{\C G}{} = W$ is a simple $\C G$-module; $W \cong \vw \cdot \overline{W}$ as $\C G$-modules; $W$ is of antilinear type $\bR$;
  $\Aut_L(V) = \{\pm \id\}$.
\item[(2)]
$V\res{\C G}{} = W \oplus W'$ is the sum of two simple $\C G$-modules, both of antilinear type $\bC$; $W \not\cong W'$ and  $W \not\cong \vw \cdot \overline{W}$ as $\C G$-modules; $\Aut_L(V) \cong \C \setminus 0$.
\item[(3)]
$V\res{\C G}{} = W \oplus W'$ is the sum of two simple $\C G$-modules, both of antilinear type $\bH$; $W \cong W'$ and $W \cong \vw \cdot \overline{W}$ as $\C G$-modules; $\Aut_L(V) \cong \SL_2(\C)$.
\end{itemize}
%
\end{prop}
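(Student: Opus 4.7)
The plan is to determine the structure of $V$ by first controlling its underlying $\bC G$-module via the isomorphism $\rho(\vw)\colon V^*\to \vw\cdot V$, and then reading off the three cases and their automorphism groups. I would begin by decomposing $V=\bigoplus_{i\in I}V_i$ into $\bC G$-isotypic components, with $V_i\cong W_i^{m_i}$ and $W_i$ pairwise non-isomorphic simple. The $\bC G$-isomorphism $\rho(\vw)$ then matches summands of $V^*$ and $\vw\cdot V$, giving a permutation $\sigma$ of $I$ with $\vw\cdot W_i\cong W_{\sigma(i)}^*$ and $m_{\sigma(i)}=m_i$. Since $\rho(\vw)^{-1}$ sends $V_i$ (viewed in $\vw\cdot V$) into $V_{\sigma(i)}^*\subseteq V^*$, Schur's lemma forces $B_\vw$ to pair $V_i$ with $V_{\sigma(i)}$ and to vanish on $V_j\times V_i$ for $j\neq\sigma(i)$. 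Comparing the supports on both sides of the $\vw$-hermitian identity $B_\vw(u,\vw^2 v)=B_\vw(v,u)$, using that $\vw^2\in G$ stabilises each $V_i$, then yields $\sigma^2=\id$ and in particular $\overline{\chi}=\vw\cdot\chi$.

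Next, any $\sigma$-stable subset $S\subseteq I$ produces a $\bC G$-submodule $\bigoplus_{i\in S}V_i$ on which $B_\vw$ restricts nondegenerately, hence an L-subrepresentation by Proposition~\ref{Maschke_th}. Irreducibility therefore confines $\sigma$ to a single orbit, so either (Case~A) $V=W^m$ is isotypic with $\vw\cdot W\cong W^*$, or (Case~B) $V=W^m\oplus (W')^m$ with $W\not\cong W'$ and $\vw\cdot W\cong (W')^*$. To handle Case~B I would pick any simple $U\subseteq W^m$, take its $B_\vw$-annihilator $K\subseteq (W')^m$, and extract a simple complement $U'$ of $K$ in $(W')^m$ via Maschke; then $U\oplus U'$ is an L-subrepresentation, proper unless $m=1$, so $m=1$ and we are in case~(2). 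Here $W\not\cong\vw\cdot\overline W$ because $W\not\cong W'$, and $\End_{\bC G}(V)=\bC\oplus\bC$ together with the constraint $\lambda\mu=1$ for preservation of $B_\vw$ yields $\Aut_L(V)\cong\bC^{\times}$.

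In Case~A, the space of $\vw$-invariant bilinear forms on $W$ is one-dimensional by Schur, spanned by some $B_0$. The formula $(u,v)\mapsto B_0(v,\vw^2 u)$ defines another such form, so it equals $\epsilon B_0$ for a scalar $\epsilon$; iterating and using $\vw$-invariance under $\vw^2\in G$ gives $\epsilon^2=1$. Hence $B_0$ is $\vw$-symmetric ($\epsilon=1$) or $\vw$-alternating ($\epsilon=-1$), and these correspond to antilinear types $\bR$ and $\bH$ respectively via the linear/antilinear comparison in \cite{AREP}. Since $W$ is simple, $\vw$-invariant forms on $V=W^m$ correspond to matrices $A\in M_m(\bC)$ via $A\otimes B_0$, with the $\vw$-symmetric condition on $B_\vw$ becoming $A^T=\epsilon A$. $\bC G$-submodules of $W^m$ are $L\otimes W$ for subspaces $L\subseteq\bC^m$, and L-subrepresentations correspond to those $L$ on which $A|_L$ is nondegenerate. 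When $\epsilon=1$, any $v$ with $v^TAv\neq 0$ produces a proper L-subrepresentation for $m\geq 2$, so $m=1$, giving case~(1) and $\Aut_L(V)=\{\pm\id\}$. When $\epsilon=-1$ the matrix $A$ is alternating and hence of even rank; for $m=2$ every proper subspace of $\bC^2$ is isotropic so $V$ is L-irreducible, yielding case~(3) with $\Aut_L(V)=\Sp_2(\bC)=\SL_2(\bC)$, while for $m\geq 4$ a proper symplectic subspace provides a proper L-subrepresentation. The main obstacle is the matrix reduction in Case~A and, in particular, confirming that the sign $\epsilon$ coincides with the antilinear $\bR$/$\bH$ dichotomy of \cite{AREP}; the orbit classification and the automorphism computations are then comparatively routine linear-algebraic bookkeeping.
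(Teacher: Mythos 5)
Your proof is correct, but it is organised quite differently from the paper's. The paper fixes a single simple $\C G$-submodule $W\subseteq V$ and splits on whether $\ker(B_\vw|_W)=W^{\perp}\cap W$ is $0$ or $W$; in the degenerate case it produces a complement $W'$, shows $V=W\oplus W'$ with $W=W^{\perp}$, encodes $B_\vw$ in one Schur map $g:\vw\cdot W\to W'^*$, and in the $W\cong W'$ subcase transfers the structure to $\widetilde V=W\oplus W$ and plays the $f_1=\lambda f_2$, $\lambda=\pm1$ game, excluding $\lambda=1$ because the diagonal would be a proper L-subrepresentation. You instead work globally: you decompose $V$ into $\C G$-isotypic blocks, read off from $\rho(\vw)$ an involutive permutation $\sigma$ of the blocks, use irreducibility to force a single $\sigma$-orbit, and then in the fixed-block case reduce $B_\vw$ to $A\otimes B_0$ with $A^T=\epsilon A$, $\epsilon=\pm1$, so that the multiplicity is killed by elementary symmetric/alternating linear algebra ($m=1$ when $\epsilon=1$, $m=2$ when $\epsilon=-1$). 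Your sign computation for $B_0$ and the exclusion of large $m$ play exactly the role of the paper's $\lambda=\pm1$ and diagonal arguments, and both routes lean on the same external inputs from \cite{AREP} (a nondegenerate $\vw$-invariant $\vw$-symmetric, resp.\ $\vw$-alternating, form forces type $\bR$, resp.\ $\bH$; $W\not\cong\vw\cdot\overline W$ gives type $\C$), so that identification is not an extra obstacle. What your version buys is a uniform treatment of arbitrary multiplicities and a cleaner derivation of the automorphism groups from $A\otimes B_0$; what the paper's buys is brevity, since it never needs the full isotypic bookkeeping. A few small points you should still write out: that $(u,v)\mapsto B_0(v,\vw^2u)$ is again $\vw$-invariant (so Schur applies), that your annihilator $K$ is a $\C G$-submodule (this uses $\vw$-invariance of $B_\vw$) and that the pairing on $U\oplus U'$ is nondegenerate in both arguments via $\vw$-symmetry, and that in case~(2) the partner $W'$ is also of type $\C$ by the symmetric argument; none of these is a genuine gap.
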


\begin{proof}
  Let $W$ be a simple $\C G$-submodule of $V$. Since the form $B_{\vw}$ is $\vw$-invariant, $W^{\perp}$ is a $\C G$-submodule of $V$.
  Because the form is $\vw$-symmetric, $W^{\perp}={}^{\perp}W$ and $\ker (\left.B_{\vw}\right|_W) = W^{\perp} \cap W$ is a $\C G$-submodule of $W$. It must be zero or $W$. Hence, we have two mutually exclusive cases.

{\bf Case A: Some simple $\C G$-submodule $W$ of $V$ satisfies $W^{\perp} \cap W = 0$.}
It follows that $W$ is an L-subrepresentation of $V$, hence, $W = V$. By \cite[Prop. 2.15]{AREP}, $V$ has antilinear type $\bR$ and the bilinear form $B_\vw$ yields an isomorphism $W \cong \vw \cdot \overline{W}$.

Since $W$ is simple, any $\C G$-automorphism of $V$ is a scalar $\alpha \id$, $\alpha\in\C\setminus\{0\}$. 
The only scalars preserving the bilinear form $B_\vw$ are $\pm 1$, hence, $\Aut_L(V) = \{\pm \id\}$.
{\em This is statement~(1).}

{\bf Case B: Any simple $\C G$-submodule $W$ of $V$ satisfies $W^{\perp} \cap W = W$.}
Fix $W$. Notice that $W \subset W^{\perp}$ and write $V = W^{\perp} \oplus W'$ as a $\C G$-module.

Observe that $W \oplus W'$ is an L-subrepresentation. Indeed,
$$\ker (\left.B_{\vw}\right|_{W \oplus  W'}) = (W \oplus W')^{\perp} \cap (W \oplus W') \subseteq W^{\perp} \cap (W \oplus W') = W,$$
and, by the non-degeneracy of $B_{\vw}$ on $V$, $\ker (\left.B_{\vw}\right|_{W \oplus W'})  = 0$.
By the irreducibility of $V$, $V = W \oplus W'$ and, therefore, $W = W^{\perp}$.

It follows that $2 \dim W = \dim W + \dim W^{\perp} = \dim V$. Moreover, $W'$ is also a simple $\bC G$-module because any simple $\C G$-submodule $U$ of $W'$ also satisfies $2 \dim U = \dim V$.

The $\C G$-module homomorphism between simple $\bC G$-modules defined by $B_{\vw}$ 
\begin{equation} \label{hom_g}
g: \vw \cdot W \hookrightarrow {\vw \cdot V} \xrightarrow{f}  {V^*} \twoheadrightarrow {W'^*}, \ \  f(u)(v) = B_{\vw}(v,u),
\end{equation}
is non-zero because $W = W^{\perp}$.
Furthermore, $g$ determines $B_{\vw}$ because $B_{\vw}$ is $\vw$-symmetric:
\begin{align} \label{g_to_Bw}
B_{\vw}(u + u',v + v') = B_{\vw}(u , v') + B_{\vw}(u',v) = g(u)(\vw^{-2}v') + g(v)(u'),
\end{align}
for all $u + u',v + v' \in W \oplus W'$. We have two further subcases.

{\bf Subcase B1: $W \not\cong W'$.} Then $W \not\cong \vw \cdot \overline{W}$, so $W$ is of antilinear type $\C$ \cite[Thm. 5.4]{AREP}. Let $\fn{f}{V}{V}$ be a morphism of L-representations, $A$ -- the matrix of $f$. Note that $B(\vw) = A^T B(\vw) A$ for a fixed $\vw$ is a necessary and sufficient condition for $f\in \Aut_{\bC G} (V)$ to be a morphism of L-representations.
As $W \not\cong W'$ and $W = W^{\perp}$, $A$ and $B(\vw)$ have the form
\begin{align*}
B(\vw) =
\begin{pmatrix}
\mathbf{0} & Y \\ X & \mathbf{0}
\end{pmatrix}, \:
A =  
\begin{pmatrix}
\alpha \id & \mathbf{0} \\ \mathbf{0} & \beta\id 
\end{pmatrix}, \text{ thus }
\begin{pmatrix}
\mathbf{0} & Y \\ X & \mathbf{0}
\end{pmatrix}
=
\begin{pmatrix}
\mathbf{0} & \alpha \beta Y \\ \alpha\beta X &\mathbf{0}
\end{pmatrix}\, .
\end{align*}
It follows that $\alpha\beta = 1$ and $\Aut_L(V) \cong \C \setminus 0$.
{\em This is statement~(2).} 

{\bf Subcase B2: $W\cong W'$.} Then $W \cong \vw \cdot \overline{W}$.
To show that $W$ is of antilinear type $\bH$, 
it suffices to construct a non-degenerate, $\vw$-invariant, $\vw$-alternating bilinear form on $W$ \cite[Thm. 5.4]{AREP}. 

Let $\fn{h}{W}{W'}$ be a $\C G$-module isomorphism.
Consider $\widetilde{V}\coloneqq W \oplus W$.
Let us use the $\C G$-module isomorphism $\id\oplus h : \widetilde{V} \rightarrow V$
to turn $\widetilde{V}$ into an irreducible L-representation:
$$
\widetilde{B}_{\vw}((u_1,u_2),(v_1,v_2)) \coloneqq B_{\vw}(u_1 + h(u_2), v_1 + h(v_2))\, .
$$
Consider isomorphisms of $\bC G$-modules $\fn{f_1,f_2}{\vw \cdot W}{W^*}$ where
\begin{equation} \label{ff_equation}
f_1(w)(v) =  \widetilde{B}_{\vw}((0,v),(w,0)), \: f_2(w)(v) =  \widetilde{B}_{\vw}((v,0),(0,w)).
\end{equation}
There exists $\lambda \in \bC$ such that $f_1 = \lambda f_2$. As the form is $\vw$-symmetric, $f_2(w)(v) = f_1(\vw^2v)(w)$. Then 
\begin{align*}
  f_1(w)(v) &= \lambda f_2(w)(v) = \lambda f_1(\vw^2v)(w) = \lambda^2 f_2 (\vw^2v)(w), \\
  &= \lambda^2 f_1(\vw^2 w)(\vw^2 v) = \lambda^2 f_1(w)(v).
\end{align*}
Therefore, $\lambda = \pm 1$. If $\lambda = 1$, then the diagonal $\{ (w,w) \}$ is an L-subrepresentation of $\widetilde{V}$, which contradicts irreducibility. Thus, $\lambda = -1$ and the form $D(u,v) \coloneqq f_1(v)(u)$ on $W$ is non-degenerate, $\vw$-invariant and $\vw$-alternating. Thus, $W$ is of antilinear type $\bH$.
Moreover, this shows that $\widetilde{B}_{\vw}$ is symplectic.
As before, $B(\vw) = A^T B(\vw) A$ is a necessary and sufficient condition for $f\in \Aut_{\bC G} (V)$ to be a morphism of L-representations.
Now $A$ and $B(\vw)$ have the form
$$
B(\vw) =
\begin{pmatrix}
\mathbf{0} & -X \\ X & \mathbf{0}
\end{pmatrix}, \:
A =  
\begin{pmatrix}
\alpha \id & \beta \id \\ \gamma\id & \delta\id 
\end{pmatrix}, \text{ thus }
\begin{pmatrix}
\mathbf{0} & -X \\ X & \mathbf{0}
\end{pmatrix}
= (\alpha\delta-\beta\gamma) \begin{pmatrix}
\mathbf{0} & - X \\ X  & \mathbf{0}
\end{pmatrix}.
$$
Thus, $A$ is an L-homomorphism if and only if
$\begin{psmallmatrix}
\alpha & \beta \\ \gamma & \delta  
\end{psmallmatrix} \in \SL_2(\C)$.
Therefore, $\Aut_L(V) \cong \SL_2(\C)$.
{\em This is statement~(3).} 
\end{proof}

We can now describe all L-representations.

\begin{cor}
Any L-representation is determined up to isomorphism by the underlying $\C G$-module.
\end{cor}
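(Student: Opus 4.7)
The plan is to combine Krull-Remak-Schmidt (Corollary~\ref{Maschke_cor}) with the trichotomy of Proposition~\ref{irrL}. Each L-representation decomposes essentially uniquely into L-irreducibles, and the three classes of Proposition~\ref{irrL} are mutually distinguishable from the underlying $\C G$-module alone. Hence the $\C G$-module already determines the multiplicity of each L-irreducible in any L-decomposition: each simple summand $W$ of antilinear type $\bR$ with $W\cong\vw\cdot\overline{W}$ contributes one type~(1) irreducible; each unordered pair $\{W,W'\}$ of non-isomorphic type~$\bC$ simples with $W'\cong\vw\cdot\overline{W}$ contributes one type~(2) irreducible; and each pair of copies of a type~$\bH$ simple $W\cong\vw\cdot\overline{W}$ contributes one type~(3) irreducible. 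Thus it suffices to prove the corollary for L-irreducibles.

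For the irreducible case, I would take $V$ L-irreducible carrying two non-degenerate $\vw$-invariant $\vw$-symmetric forms $B_\vw,B'_\vw$ defining two L-structures, and produce $f\in\Aut_{\C G}(V)$ satisfying $B'_\vw(u,v)=B_\vw(f(u),f(v))$; by Lemma~\ref{lemma_3corners} such an $f$ automatically extends to an isomorphism of L-representations. This reduces the problem, in each of the three cases, to transitivity of the natural $\Aut_{\C G}(V)$-action on the space of admissible forms.

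In case~(1), $V$ is simple, so Schur's lemma forces the form space to be one-dimensional and $\Aut_{\C G}(V)=\C^\times$ acts on it by squaring, which is surjective onto $\C^\times$ since $\C$ is algebraically closed. In case~(2), Schur again makes the form space one-dimensional --- the form pairs the two non-isomorphic simple components --- and the torus $\Aut_{\C G}(V)\cong\C^\times\times\C^\times$ acts transitively by $(\alpha,\beta)\mapsto\alpha\beta$. In case~(3), after fixing a $\C G$-isomorphism $V\cong W\oplus W$, Schur identifies the space of $\vw$-invariant forms on $V$ with $M_2(\C)$; the $\lambda=-1$ computation of subcase~B2 shows that $\vw$-symmetry together with the $\bH$-type of $W$ forces the representing matrix to be alternating. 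Hence the admissible form space is identified with non-degenerate alternating $2\times 2$ complex matrices, on which $\Aut_{\C G}(V)=\GL_2(\C)$ acts by congruence $B\mapsto A^T B A$; transitivity here is the standard classification of symplectic forms on $\C^2$.

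The only non-routine step is case~(3), where one must use both the $\bH$-type argument from subcase~B2 and Schur's lemma to pin down the admissible form space correctly before invoking transitivity. The remaining two cases follow immediately from Schur's lemma and the algebraic closedness of $\C$.
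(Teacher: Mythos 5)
Your proposal is correct and takes essentially the same route as the paper: reduce to L-irreducibles via Maschke and Krull--Remak--Schmidt, then use Schur's Lemma to see that the admissible forms on an irreducible form a single line (equivalently a single $\Aut_{\C G}(V)$-orbit) and rescale by a square root of the scalar. The only difference is presentational: in case (3) the paper deduces $\widetilde{B}_\vw=\alpha B_\vw$ uniformly with case (2) from the map $g$ of \eqref{hom_g} together with \eqref{g_to_Bw}, whereas you justify the vanishing of the diagonal blocks explicitly through the alternating $M_2(\C)$ computation and transitivity of the congruence action --- a slightly more detailed account of the same step.
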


\begin{proof}
By Maschke's Theorem (Proposition~\ref{Maschke_th}) and the Krull-Remak-Schmidt Theorem (Corollary~\ref{Maschke_cor}),
it is sufficient to prove the corollary for irreducible L-representations. Let $V$ be an irreducible L-representation, whose $\C G$-module carries a second L-representation structure, denoted $\widetilde{V}$ and $\widetilde{B}_{\vw}$. For $V$, consider the three mutually exclusive cases in Proposition~\ref{irrL}. 

In case~(1), $\widetilde{B}_\vw = \alpha B_{\vw}$ for some $\alpha\in\bC \setminus 0$, by Schur's Lemma. Any choice of the square root yields an isomorphism of L-representations $\sqrt{\alpha}\id:\widetilde{V}\rightarrow V$.

In cases~(2) and (3), $V = W \oplus W'$ as in case B of Proposition \ref{irrL}. Write $\widetilde{g}_\vw$ and $g_{\vw}$ for the maps from ~\eqref{hom_g}. Now with respect to both of the forms $\widetilde{B}_{\vw}$ and $B_{\vw}$, $W^{\perp} = W$, hence both $\widetilde{g}_\vw$ and $g_{\vw}$ are non-zero. Thus by Schur's Lemma we have that $\widetilde{g}_\vw = \alpha g_{\vw}$ for some $\alpha\in\bC \setminus 0$. Then by ~\eqref{g_to_Bw}, $\widetilde{B}_\vw = \alpha B_{\vw}$, and again $\sqrt{\alpha}\id:\widetilde{V}\rightarrow V$ is isomorphism of L-representations.
%
%
\end{proof}

The three mutually exclusive possibilities for an irreducible L-representation in Proposition~\ref{irrL} correspond exactly to antilinear types \cite[Table 4]{AREP}. In particular, any L-representation is an A-representation.

Now pick an A-representation $V$. Using the unitary trick, fix a $G$-invariant  hermitian form $\langle \cdot, \cdot \rangle$ on it.
Define
\begin{equation} \label{LRep_structure}
  B_{\vw}(u,v) \coloneqq \langle u , \vw^{-1} v \rangle + \langle v ,\vw u \rangle \, .
\end{equation}  
\begin{lem} \label{LRep_lemma}
Formula~\eqref{LRep_structure} defines an L-representation structure on $V$.
\end{lem}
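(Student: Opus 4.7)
The plan is to reduce the claim to checking the three conditions of the linear analogue of Lemma~\ref{lemma_3corners} (namely $\vw$-invariance, $\vw$-symmetry with respect to the trivial involution, and the compatibility $B_{\vw_1}(u,v)=B_{\vw_2}(u,\vw_2\vw_1^{-1}v)$), together with non-degeneracy and $\bC$-bilinearity. All four checks are short calculations that exploit only two inputs: the $G$-invariance $\langle \vg x,\vg y\rangle=\langle x,y\rangle$ of the chosen hermitian form, and the fact that on an A-representation $\rho(\vw)$ is antilinear while $\rho(\vg)$ is linear.

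First I would verify that $B_\vw$ is $\bC$-bilinear. The key observation is that the antilinearity of $\vw^{\pm 1}$ precisely cancels the conjugate-linearity of $\langle\cdot,\cdot\rangle$ in its second slot, so each of the two summands $\langle u,\vw^{-1}v\rangle$ and $\langle v,\vw u\rangle$ is $\bC$-linear in both arguments.

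Next I would verify the three identities. For $\vw$-invariance, expand
\[
B_\vw(\vg u,\vw\vg\vw^{-1}v)=\langle \vg u,\vg\vw^{-1}v\rangle+\langle \vw\vg\vw^{-1}v,\vw\vg u\rangle,
\]
using $\vw^{-1}\vw\vg\vw^{-1}=\vg\vw^{-1}$ and $\vw\vg=(\vw\vg\vw^{-1})\vw$; $G$-invariance applied to each summand recovers $B_\vw(u,v)$. For $\vw$-symmetry, rewrite $B_\vw(u,\vw^2 v)=\langle u,\vw v\rangle+\langle \vw^2 v,\vw u\rangle$, and note that $\vw^2\in G$ together with $G$-invariance turns $\langle \vw^2 v,\vw u\rangle$ into $\langle v,\vw^{-1}u\rangle$, yielding $B_\vw(v,u)$. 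For the compatibility, set $\vh\coloneqq\vw_2\vw_1^{-1}\in G$, so $\vw_2=\vh\vw_1$; substituting and applying $G$-invariance to the second summand reduces $B_{\vw_2}(u,\vw_2\vw_1^{-1}v)$ to $B_{\vw_1}(u,v)$.

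Finally, for non-degeneracy, suppose $B_\vw(u,v)=0$ for all $v$. Specialising to $v=\vw u$ gives
\[
0=B_\vw(u,\vw u)=\langle u,u\rangle+\langle \vw u,\vw u\rangle,
\]
and positive-definiteness of $\langle\cdot,\cdot\rangle$ forces $u=0$. No step here is subtle; the only point requiring slight care is tracking how antilinearity of $\rho(\vw)$ interacts with the sesquilinearity of $\langle\cdot,\cdot\rangle$ to make everything genuinely bilinear, and this is settled once in the bilinearity check.
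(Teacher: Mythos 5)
Your proof is correct and follows essentially the same route as the paper: non-degeneracy via $0=B_\vw(u,\vw u)=\langle u,u\rangle+\langle \vw u,\vw u\rangle$ and positive-definiteness, then verification of the three conditions of Lemma~\ref{lemma_3corners} (with trivial involution) by direct use of $G$-invariance of $\langle\cdot,\cdot\rangle$ and the relations $\vw\vg=(\vw\vg\vw^{-1})\vw$, $\vw^2\in G$, $\vw_2=(\vw_2\vw_1^{-1})\vw_1$. Your explicit check that the antilinearity of $\rho(\vw^{\pm1})$ cancels the conjugate-linear slot of the hermitian form, making $B_\vw$ genuinely bilinear, is a point the paper leaves implicit and is a welcome addition rather than a deviation.
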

\begin{proof}
  The forms are  non-degenerate. Consider $u\in V$ such that $B_\vw (u,v) = 0$ for all $v \in V$.
  Hence, $0 = B_\vw (u , \vw u) = \langle u , u \rangle + \langle \vw u ,\vw u \rangle$.
  Since $\langle w, w \rangle \geq 0$ for all $w \in V$, we conclude that $\langle u,u \rangle = 0$ and  $u = 0$.

It remains to verify the conditions of Lemma~\ref{lemma_3corners}. $B_\vw$ is $\vw$-invariant:
\begin{align*}
B_{\vw}(\vg u, \vw\vg\vw^{-1}v) &= \langle \vg u , \vg\vw^{-1}v \rangle + \langle\vw\vg\vw^{-1} v ,\vw \vg u \rangle, \\
&= \langle u , \vw^{-1}v \rangle + \langle v ,\vw u \rangle = B_{\vw}(u, v) \, .
\end{align*}
The second equality holds by $G$-invariance. $B_\vw$ is $\vw$-symmetric:
$$
B_{\vw}(u, \vw^{2}v) = \langle u , \vw v \rangle + \langle \vw^{2}v ,\vw u \rangle = 
\langle u ,\vw v \rangle + \langle v , \vw^{-1}u \rangle  = B_{\vw}(v, u) \, .
$$
The final condition holds as well:
\begin{align*}
B_{\vw_2}(u, \vw_2 \vw_{1}^{-1}v) &= \langle u , \vw_{1}^{-1}v \rangle + \langle\vw_{2}\vw_{1}^{-1} v ,\vw_{2} u \rangle, \\
&= \langle u , \vw_{1}^{-1}v \rangle + \langle v ,\vw_{1} u \rangle  = B_{\vw_1}(u, v) \, . \qedhere
\end{align*}
\end{proof}

Let us summarise the discussion in this section with the next result.
\begin{cor} \label{bijection_L}
A $\C G$-module extends to an A-representation if and only if it extends to an L-representation. This gives a bijection between isomorphism classes of A-representations and L-representations. 
\end{cor}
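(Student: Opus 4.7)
The plan is to pass to irreducibles and then read off the biconditional from the case analysis already established in Proposition~\ref{irrL}. Maschke's Theorem (Proposition~\ref{Maschke_th}) together with the Krull--Remak--Schmidt Theorem (Corollary~\ref{Maschke_cor}) reduces the statement on the L-side to irreducible L-representations, and the analogous decomposition for A-representations is available from \cite{AREP}. So it suffices to establish both the biconditional and the bijection at the level of irreducibles.

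For the ``L $\Rightarrow$ A'' implication I would simply inspect the three cases of Proposition~\ref{irrL}: the underlying $\C G$-module of an irreducible L-representation is either a simple module of antilinear type $\bR$ (Case~(1)), a sum $W \oplus \vw\cdot\overline{W}$ of two non-isomorphic simples of type $\bC$ (Case~(2)), or a sum $W \oplus W$ with $W$ simple of type $\bH$ (Case~(3)). By \cite[Thm.~5.4]{AREP} these are precisely the $\C G$-modules underlying the three families of irreducible A-representations, so each extends to an A-representation. For the reverse implication ``A $\Rightarrow$ L'' I would apply Lemma~\ref{LRep_lemma} directly: the unitary trick furnishes a $G$-invariant hermitian form on the underlying module of any A-representation, and formula~\eqref{LRep_structure} then constructs the required L-structure.

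To upgrade this to a bijection of isomorphism classes, I would combine the preceding corollary, which says that an L-representation is determined up to isomorphism by its underlying $\C G$-module, with the analogous fact for A-representations from \cite{AREP}. Both ``forget the odd data'' maps then factor through the common set of isomorphism classes of $\C G$-modules of the three admissible shapes identified above, yielding inverse bijections; their composition is the desired correspondence.

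The one point I expect to require care is checking that the trichotomy of Proposition~\ref{irrL} matches the trichotomy of antilinear types in \cite[Thm.~5.4]{AREP} on the nose, including the distinction between Cases~(2) and~(3) via the condition $W \cong \vw\cdot\overline{W}$ versus $W \not\cong \vw\cdot\overline{W}$. This is essentially remarked in the paragraph preceding Lemma~\ref{LRep_lemma} and reduces to a direct comparison of structural invariants, so I do not anticipate a substantive obstacle beyond bookkeeping.
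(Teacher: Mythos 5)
Your proposal is correct and takes essentially the same route as the paper: the corollary is stated there as a summary of the preceding discussion, namely Proposition~\ref{irrL} together with the matching of its three cases to the antilinear types (giving L $\Rightarrow$ A), Lemma~\ref{LRep_lemma} with formula~\eqref{LRep_structure} (giving A $\Rightarrow$ L), and the fact that both L- and A-representations are determined up to isomorphism by their underlying $\C G$-modules (giving the bijection). Your reduction to irreducibles and the case-by-case comparison with \cite[Thm.~5.4]{AREP} is exactly the bookkeeping the paper leaves implicit.
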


\subsection{Intermediate Category and Functors}\label{int_1}
Let $V_1, \ldots, V_k$ be a complete set of distinct irreducible A-representations.
We turn them into irreducible L-representations as in Lemma~\ref{LRep_lemma}, by choosing a $G$-invariant hermitian form $\langle \cdot , \cdot \rangle$ on each $V_i$.  

Let us consider the skeletons of $\sL(G)$ and $\sA(G)$ that consist of all finite direct sums $\oplus_{i=1}^k  n_i V_i$. Each object in this skeleton is canonically (after previous choices) equipped with a $G$-invariant hermitian form. Let use define {\em the intermediate category}  $\sA^{\ast}(G)$ on this skeleton: $\sA^\ast (G)(V,W)$ consists of those morphisms in $\sA(G)(V,W)$ that preserve the canonical hermitian form.
Such morphisms are necessarily injective. 
It follows from \eqref{LRep_structure} that any morphism in $\sA^\ast (G)(V,W)$ preserves all $B_\vw$, hence $\sA^\ast (G)(V,W)$
is a subset of $\sL (G)(V,W)$ as well.

Both inclusions define the same subspace topology on $\sA^\ast (G)(V,W)$. Thus, we have the following topological (enriched in $\sT$) functors
\begin{equation} \label{functors_AL}
  \sL(G) \xleftarrow{\Phi} \sA^\ast (G) \xrightarrow{\Psi} \Mon (\sA(G)) \, .
\end{equation}  
Both functors are essentially surjective on objects.  Hence, to prove parts (i) and (ii) of Theorem~\ref{main_theorem}, it suffices to show that the functors are homotopy equivalences on morphisms.

\subsection{Proof of Part (i) of Theorem~\ref{main_theorem}} \label{proof_part_1}
Since all morphisms are isomorphisms the task is to compute endomorphisms of an object
$U = \oplus_i  n_i V_i$
in all three categories. 
The direct sum decomposition works in $\C G$-modules. Hence,  
$$
\Aut_L (U) = \prod_i  \Aut_L (n_i V_i), \;
\Aut_A (U) = \prod_i  \Aut_A (n_i V_i),
$$
$$
\mbox{and } \ \ \ 
\Aut_{\ast} (U) \coloneqq \sA^*(G) (U,U) = \prod_i  \sA^*(G) (n_i V_i, n_i V_i),
$$
with similar direct product decompositions for the functors: $\Phi(U) = \prod_i  \Phi(n_i V_i)$ and $\Psi(U) = \prod_i  \Psi(n_i V_i)$.
This reduces the theorem to the case of an isotypical representation, that is, $U=nV$.

It suffices to show that both $\Phi(nV)$ and $\Psi(nV)$ are homotopy equivalences for an irreducible L-representation $V$ of dimension $d$.
These maps are homotopy equivalences between a Lie group and its maximal compact subgroup.
The proof consists of computations of these groups: all algebraic homomorphisms in the proof are continuous.
The result is summarised in Table~\ref{table_i}.
\begin{table}
\caption{Homotopy Equivalences for Part (i)}
\label{table_i}
\begin{center}
\begin{tabular}{ |c|c|c|c| }
Type of $V$  & $\Aut_L (nV)$ & $\Aut_{\ast} (nV)$ & $\Aut_A (nV)$\\
\hline  
$\bR$  & $\mO_n(\C)$ & $\mO_n (\bR)$ & $\GL_n (\bR)$ \\
$\bC$  & $\GL_n (\bC)$ & $\U_n (\bC)$ & $\GL_n (\bC)$ \\
$\bH$   & $\Sp_{2n} (\bC)$ & $\Sp(n)$ & $\GL_n (\bH)$ \\
\end{tabular}
\end{center}
\end{table}

{\bf Case 1:}
Let $V$ be an irreducible A-representation of type $\R$. This corresponds to $\Aut_A(V)=\bR$ and case~(1) in Proposition~\ref{irrL}.
Then $\Aut_A(nV)=\GL_n (\bR)$.

The automorphisms in $\sA^*(G)$ preserve a hermitian form so that $\Aut_{\ast} (nV) = \GL_n (\bR) \cap \U (nV)$. Choose an orthonormal basis of $V$. Extend it by repeating to an orthonormal basis of $nV$. In this basis, $\U (nV)$ consists of unitary matrices, while $\Aut_A(nV)$ consist of the $dn\times dn$-matrices $M=(M_{i,j})_{n\times n}$, where each block $M_{i,j}$ is $\alpha_{i,j}\id_d$, $\alpha_{i,j}\in\bR$. It follows  that $\Aut_{\ast} (nV) = \mO_n (\bR)$ is a maximal compact subgroup of $\Aut_A(nV)=\GL_n (\bR)$.
The groups have two components and $\Psi (nV)$ is a homotopy equivalence between a Lie group and its maximal compact subgroup.

The group $\Aut_L(nV)$ is a subgroup of $\Aut_{\bC G}(nV) = \GL_n(\C)$.
It consists of $M\in\GL_n(\C)\subseteq \GL_{dn}(\C)$ subject to the extra condition
\begin{equation}\label{cond_c1}
  M^T B^{nV}(\vw) M = B^{nV} (\vw) \, ,
\end{equation}
which could be checked for one element $\vw$. 
In the basis as above, the matrix $B^{nV}(\vw)$ is block-diagonal with $n$ square blocks $B^{V}(\vw)$.
On the other hand, $M\in\Aut_{\bC G}(nV)$ is of a special block structure as well: 
$M=(M_{i,j})_{n\times n}$, where $M_{i,j}=\alpha_{i,j}\id_d$, $\alpha_{i,j}\in\bC$.
Thus, condition~\eqref{cond_c1} becomes $(\alpha_{i,j})^T(\alpha_{i,j})=\id_n$ and $\Aut_L(nV) = \mO_n(\C)$.
Hence, $\Phi (nV)$ is a homotopy equivalence as well.

{\bf Case 2:}
Let $V$ be an irreducible A-representation of type $\bC$. This corresponds to $\Aut_A(V)=\bC$ and case~(2) in Proposition~\ref{irrL}.
In particular, $V = W \oplus W'$ as a $\bC G$-module and $\Aut_A(nV)=\GL_n (\bC)$.

To proceed, we choose an orthonormal bases of $W$ and $W'$ and replicate them through $nV$. This yields an explicit isomorphism
$$
\Aut_{\bC G} (nV) \rightarrow \GL_n (\bC) \times \GL_n (\bC), \ 
M = (M_{i,j})_{n \times n} \mapsto  \left( (\alpha_{i,j})_{n \times n}, (\beta_{i,j})_{n \times n} \right),
$$
where each block is a $d\times d$-matrix of the form
\begin{equation} \label{Mij_form}
M_{i,j}= \begin{pmatrix}
  \alpha_{ij} \id_{d/2} & \mathbf{0}_{d/2} \\ \mathbf{0}_{d/2} & \beta_{ij}\id_{d/2}
  \end{pmatrix} \, , \ \alpha_{ij},\beta_{ij}\in\bC \, .
\end{equation}
The subgroup $\Aut_A(nV)$ consist of the matrices $M$ with $\alpha_{ij}=\beta_{ij}$ for all $i$ and $j$.
Preservation of the hermitian form on $nV$ is equivalent to $M$ being hermitian, which, in turn, is equivalent to $(\alpha_{ij})_{n\times n}$ being hermitian.
Thus, $\Aut_{\ast} (nV) = \U_n (\bC)$ and $\Psi (nV)$ is a homotopy equivalence between a connected Lie group and its maximal compact subgroup.

Let us consider $M \in \Aut_L(nV) \leq \Aut_{\bC G}(nV)$. It must be in the form~\eqref{Mij_form}, additionally satisfying the condition~\eqref{cond_c1}.
In our basis as above, the matrix $B^{nV}(\vw)$ is block-diagonal with $n$ square blocks
$$
B^{V}(\vw) = 
\begin{pmatrix}
\mathbf{0}_{d/2} & X \\ Y & \mathbf{0}_{d/2}
\end{pmatrix} ,
$$
where $X$ and $Y$ are some invertible matrices. Thus, the condition~\eqref{cond_c1} becomes $(\alpha_{i,j})(\beta_{i,j})^T=\id_n$.
It follows that $M\mapsto (\alpha_{i,j})$ is an isomorphism $\Aut_L(nV) \cong \GL_n(\C)$ and $\Phi (nV)$ is a homotopy equivalence. 

{\bf Case 3:}
Let $(V,\rho)$ be an irreducible A-representation of type $\bH$. This corresponds to $\Aut_A(V)=\bH$ and case~(3) in Proposition~\ref{irrL}.
In particular, $V = W \oplus W$ as a $\bC G$-module and $\Aut_A(nV)=\GL_n (\bH)$.

To proceed, we choose an orthonormal basis $e_1,\ldots e_{d/2}$ of $W$. We choose $\vw W$ as the second direct summand in $W$ and use $\vw e_1,\ldots \vw e_{d/2}$ as an orthonormal basis there. We replicate this basis of $V$ through $nV$. This yields an explicit isomorphism
$$
\Aut_{\bC G} (nV) \rightarrow \GL_{2n} (\bC), \ 
M = (M_{i,j})_{n \times n} \mapsto  \Big( \begin{pmatrix}
  \alpha_{ij} & \beta_{ij} \\ \gamma_{ij} & \delta_{ij}
\end{pmatrix}_{2 \times 2} \Big)_{n \times n} \, ,
$$
where each block is a $d\times d$-matrix of the form
\begin{equation} \label{Mij_form3}
M_{i,j}= \begin{pmatrix}
  \alpha_{ij} \id_{d/2} & \beta_{ij}\id_{d/2} \\ \gamma_{ij}\id_{d/2} & \delta_{ij}\id_{d/2}
  \end{pmatrix} \, , \ \alpha_{ij},\beta_{ij},\gamma_{ij},\delta_{ij}\in\bC \, .
\end{equation}
The subgroup $\Aut_A(nV)$ consist of such matrices $M$ that
$\begin{psmallmatrix}\alpha_{ij} & \beta_{ij} \\ \gamma_{ij} & \delta_{ij}\end{psmallmatrix}$
belongs to a fixed copy of quaternions $\bH$ inside $M_2 ( \bC)$ for each block $M_{i,j}$.
In particular, $\Aut_A(nV)\cong\GL_n(\bH)$.

Preservation of the hermitian form on $nV$ is equivalent to $M$ being hermitian.
Since $(M_{i,j})^\ast = (M_{j,i}^\ast)$ and $M_{i,j}^\ast$ is quaternionic conjugation,
this is equivalent to $(M_{ij})_{n\times n}$ being hyperunitary, considered as quaternionic matrix.
Thus, $\Aut_{\ast} (nV) = \Sp (n)$ and $\Psi (nV)$ is a homotopy equivalence between a connected Lie group and its maximal compact subgroup.

As shown in the proof of Proposition~\ref{irrL}, the bilinear form $B^V_\vw$ on the L-representation is skew-symmetric.
Thus, $B^{nV}_\vw$ is non-degenerate and also skew-symmetric. 
Hence, after identifying $\Aut_{\bC G}(V^n)$ with $\GL_{2n}(\C)$, we get an isomorphism $\Aut_L(nV) \cong \Sp_{2n}(\C)$. 
Thus, $\Phi (nV)$ is a homotopy equivalence. \qed

\subsection{Proof of Part (ii) of Theorem~\ref{main_theorem}} \label{proof_part_2}
The proof is a natural continuation of the proof in Section~\ref{proof_part_1}.  
It reduces to the case of two isotypical representations $nV$ and $kV$, $k> n$. Indeed, if $k<n$ there are no morphisms, while
$k=n$ is done in Section~\ref{proof_part_1}. 
From the functors~\eqref{functors_AL} we get embeddings of the hom-spaces
\begin{equation} \label{functors_AL_1}
%
\sL(G)(nV,kV) \xleftarrow{\Phi(nV,kV)} \sA^\ast (G)(nV,kV) \xrightarrow{\Psi(nV,kV)} \Mon (\sA(G))(nV,kV) \, .
\end{equation}

The spaces admit the natural actions of the automorphisms groups by compositions on the right and on the left, for instance, 
$$
\Aut_{\ast} (kV) \times \sA^\ast (G)(nV,kV) \times \Aut_{\ast} (nV) \rightarrow \sA^\ast (G)(nV,kV)\, .
$$
By inspection, we will see that the spaces are homogeneous over $\Aut_{\ast} (kV)$. 
To complete the proof, we need to compute them explicitly. 

Let $\bK\in\{\bR,\bC,\bH\}$, $V$ -- an irreducible A-representation of type $\bK$.
Then $\Mon (\sA(G))(nV,kV))$ is the space of injective $\bK$-linear maps ${\bK^n}\rightarrow{\bK^k}$.
Replicating an orthonormal basis of $V$ to $nV$ (as in Section~\ref{proof_part_1})   
identifies $\sA^\ast (G)(nV,kV)$ with the $\bK$-Stiefel manifold.
The standard argument, based on the Gram-Schmidt process, proves that  $\Psi(nV,kV)$ is a homotopy equivalence
(see also Lemma~\ref{hom_eq}). 

The second map $\Phi(nV,kV)$ requires case-by-case considerations.

{\bf Case 1:}
Let $V$ be an irreducible A-representation of type $\bR$.
Proceeding similarly to case~1 in Section~\ref{proof_part_1}, 
we identify $\sL(G)(nV,kV)$ with the space of $k\times n$-matrices over $\bC$ with orthogonal columns.
By Witt's Extension Theorem, $\sL(G)(nV,kV)\cong \mO_k (\bC)/\mO_n (\bC)$.

Since $n<k$, we can restrict to matrices with the determinant $1$ so that the map $\Phi(nV,kV)$ becomes the natural embedding
of the homogeneous spaces
$$
\SO_k (\bR)/\SO_n (\bR) \cong \sA^{\ast }(G)(nV,kV)\xrightarrow{\Phi(nV,kV)} \sL(G)(nV,kV)\cong \SO_k (\bC)/\SO_n (\bC) \, . 
$$
It is a homotopy equivalence by the following standard fact, whose proof is similar to \cite[Theorem 4.14]{HJR}
\begin{lem} \label{hom_eq}
  Suppose that $H\subseteq G$ are connected Lie groups and $H$ is a closed subgroup.
  If $H_c \subseteq G_c$ are maximal compact subgroups of $H$ and $G$, then the natural map $G_c/H_c \rightarrow G/H$
  is a homotopy equivalence. 
\end{lem}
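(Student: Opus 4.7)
The plan is to compare the two principal bundles $H \to G \to G/H$ and $H_c \to G_c \to G_c/H_c$ via their long exact sequences of homotopy groups, then conclude using Whitehead's theorem.

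First I would set up the fibrations. Since $H$ is closed in $G$ and $H_c$ is closed in $G_c$, the quotients $G/H$ and $G_c/H_c$ are smooth manifolds, and the projections $G \to G/H$ and $G_c \to G_c/H_c$ are principal fibre bundles, hence Serre fibrations. The inclusions $H_c \hookrightarrow H$ and $G_c \hookrightarrow G$, together with the well-defined map $G_c/H_c \to G/H$ sending $gH_c \mapsto gH$, assemble into a commutative square which is a map of fibrations.

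Next I would invoke the classical Cartan--Iwasawa--Malcev theorem: every connected Lie group is diffeomorphic to the product of a maximal compact subgroup and a Euclidean space. Applied to $G$ and $H$, this shows that the inclusions $G_c \hookrightarrow G$ and $H_c \hookrightarrow H$ are homotopy equivalences, and in particular induce isomorphisms on all homotopy groups.

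I would then apply the five lemma to the commutative ladder of long exact sequences of homotopy groups attached to the two fibrations; the isomorphisms on the fibre and total-space columns force the induced map $\pi_n(G_c/H_c) \to \pi_n(G/H)$ to be an isomorphism for every $n$. Since both spaces are smooth manifolds, they have the homotopy type of CW complexes, and Whitehead's theorem upgrades the weak equivalence to an honest homotopy equivalence. The only substantial input is the Cartan--Iwasawa--Malcev theorem providing the homotopy equivalences on fibres and total spaces; the comparison of fibrations and the five-lemma step are routine, so this citation is where I expect the argument's real content to live.
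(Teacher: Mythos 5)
Your proposal is correct and follows essentially the same route as the paper: compare the long exact homotopy sequences of the two fibrations, use that the inclusions of maximal compact subgroups are homotopy equivalences (Cartan--Iwasawa--Malcev), apply the five lemma, and upgrade the weak equivalence via Whitehead's theorem. Your explicit remarks on the CW structure of the quotients and on the compact-subgroup theorem fill in details the paper leaves implicit, but the argument is the same.
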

\begin{proof}
The commutative diagram of pointed connected topological spaces 
\begin{center}
  \begin{tikzcd}
(H_c,e) \arrow[d, "j_{H}"] \arrow[r, "f_c"]  & (G_c,e) \arrow[d, "j_{G}"]\arrow[r, "h_c"]  & (G_c / H_c, eH_c) \arrow[d, "j"]  \\
(H,e) \arrow[r, "f"]& (G,e) \arrow[r,"h" ] & (G/H, eH)
\end{tikzcd}
\end{center}
contains fibrations $h$ and $h_c$. Hence, it yields a map of long exact sequences of homotopy groups
\begin{center}
  \begin{tikzcd}
\cdots\rightarrow \pi_n(H_c) \arrow[d, "\pi_n(j_{H})"] \arrow[r, "\pi_n(f_c)"]  & \pi_n(G_c) \arrow[d, "\pi_n(j_{G})"]\arrow[r, "\pi_n(h_c)"]  & \pi_n(G_c / H_c) \arrow[d, "\pi_n(j)"]  \arrow[r, "\varpi_c"]  & \pi_{n-1}(H_c) \arrow[d, "\pi_{n-1}(j_{H})"]\rightarrow\cdots\\
\cdots\rightarrow \pi_n(H,e) \arrow[r, "\pi_n(f)"]& \pi_n(G) \arrow[r,"\pi_n(h)" ] & \pi_n(G/H)\arrow[r, "\varpi"] & \pi_{n-1}(H) \rightarrow\cdots
\end{tikzcd}
\end{center}
where all $\pi_n(j_{H})$ and $\pi_n(j_{G})$ are isomorphisms.
It follows that $\pi_n(j)$ are isomorphisms and $j$ is a weak homotopy equivalence. 
By Whitehead's Theorem, $j$ is a homotopy equivalence.
\end{proof}

{\bf Case 2:}
Let $V$ be an irreducible A-representation of type $\bC$.
Proceeding similarly to case~2 in Section~\ref{proof_part_1}, 
we identify $\sL(G)(nV,kV)$ with the space of $k\times n$-matrices over $\bC$ with unitary columns
so that the maps $\Phi(nV,kV)$ and $\Psi(nV,kV)$ are the same natural embeddings of the complex Stiefel manifolds
$$
\U_k (\bC)/\U_n (\bC) \cong \sA_{\ast }(G)(nV,kV)\xrightarrow{\Phi(nV,kV)} \sL(G)(nV,kV)\cong \GL_k (\bC)/\GL_n (\bC) \, . 
$$

{\bf Case 3:}
Let $V$ be an irreducible A-representation of type $\bH$.
Proceeding similarly to case~3 in Section~\ref{proof_part_1}, 
we identify $\sL(G)(nV,kV)$ with the space of $2k\times 2n$-matrices over $\bC$ whose columns form a Darboux basis of a subspace.
By (Symplectic) Witt's Extension Theorem, $\sL(G)(nV,kV)\cong \Sp_{2k} (\bC)/\Sp_{2n} (\bC)$.
The map $\Phi(nV,kV)$ becomes the natural embedding of the homogeneous spaces, which is a homotopy equivalence by Lemma~\ref{hom_eq}: 
$$
\Sp (k)/\Sp (n) \cong \sA^{\ast }(G)(nV,kV)\xrightarrow{\Phi(nV,kV)} \sL(G)(nV,kV)\cong \Sp_{2k} (\bC)/\Sp_{2n} (\bC) \, . 
$$
\qed

\section{The Hermitian Theory} \label{s3}

\subsection{Irreducible H-Representations}
Let $V$ be an irreducible H-representation. Each odd element $\vw$ yields an isomorphism of $\C G$-modules $\fn{\rho(\vw)}{\overline{V}^*}{\vw \cdot V}$, thus,  the character $\chi$ of the underlying $\C G$-module $V$ satisfies $\chi = \vw \cdot \chi$.

The following Proposition \ref{irrH} describing the structure of $V$ has one less case than the corresponding Proposition \ref{irrL} for the linear theory. This essential difference is due to the fact that $\vw$-invariant bilinear and sesquilinear forms behave differently under scaling.


Consider a $\vw$-invariant non-degenerate bilinear form $B$ on a simple $\C G$-module $W$. Then necessarily $B(u, \vw^2 v) = \lambda B(v,u)$ for $\lambda \in \bC^*$, because $f_1(u)(v) \coloneqq B(u, \vw^2 v)$ and $f_2(u)(v) \coloneqq B(v, u)$ both define non-zero elements of $\Hom_{\bC G}(\vw \cdot V, V^*)$. Furthermore, actually $ \lambda \in\{\pm 1\}$, because
\[
B(u,v) =  \lambda^{-1} B(v,\vw^{2}u) = \lambda^{-1}( \lambda^{-1} B(\vw^2 u, \vw^2 v) ) = \lambda^{-2} B(u,v).
\]
In this situation, for any $r \in \C$, the scaled bilinear form $\widetilde{B} \coloneqq rB$ also satisfies $\widetilde{B}(u, \vw^2 v) = \lambda \widetilde{B}(v,u)$. Conversely, if $B$ is sesquilinear, then similarly $B(u, \vw^2 v) = \lambda B(v,u)$, but for some $\lambda \in S^1$. For $r \in \C$, $\widetilde{B} \coloneqq rB$ satisfies instead 
$$
\widetilde{B}(u, \vw^2 v) = r \lambda \overline{B(v,u)} = \frac{r}{\overline{r}} \lambda \overline{\widetilde{B}(v,u)}.
$$
In particular, as $\lambda \in S^1$, we can always choose $r$ with $r^2 = \lambda^{-1}$ to make $\widetilde{B}$ $\vw$-symmetric.

\begin{prop}\label{irrH}
  Let $V$ be an irreducible H-representation.
One of the following mutually exclusive statements hold.
\begin{itemize}
\item[(1)]
$W \coloneqq V\res{\C G}{}$ is a simple $\C G$-module; $W \cong \vw \cdot W$ as $\C G$-modules; $\Aut_H(V) = \{\lambda \id \mid | \lambda | = 1\}$.
\item[(2)]
$V\res{\C G}{} = W \oplus W'$ decomposes as the sum of two simple $\C G$-modules; $W \not\cong W'$ and $W \not\cong \vw \cdot W$ as $\C G$-modules; $\Aut_H(V) \cong \C \setminus 0$.
\end{itemize}
\end{prop}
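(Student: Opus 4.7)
The plan is to mirror the proof of Proposition~\ref{irrL} very closely. The crucial simplification, which eliminates the third (``quaternionic'') case of the linear theory, is the scaling observation made just before the statement: a $\vw$-invariant non-degenerate sesquilinear form on a simple $\bC G$-module $W$ with $W \cong \vw \cdot W$ can always be rescaled to be $\vw$-hermitian, because the relevant constant lies in $S^1$ rather than in $\{\pm 1\}$.

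First I fix a simple $\bC G$-submodule $W \subseteq V$. By $\vw$-invariance, $W^\perp$ is a $\bC G$-submodule of $V$; the $\vw$-hermitian property of $B_\vw$ forces $W^\perp = {}^\perp W$, so $W \cap W^\perp$ is a $\bC G$-submodule of $W$, either $0$ or $W$. In Case A ($W \cap W^\perp = 0$), $W$ is an H-subrepresentation, so $V = W$ by irreducibility. The isomorphism $\rho(\vw) : \overline{V}^* \to \vw \cdot V$ from the opening of Section~\ref{s3}, combined with $\overline{V}^* \cong V$ for any simple $\bC G$-module, yields $V \cong \vw \cdot V$. Schur's Lemma identifies $\Aut_{\bC G}(V)$ with $\bC \setminus 0$ acting by scalars, and $\lambda \id$ preserves $B_\vw$ if and only if $|\lambda|^2 = 1$. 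This gives statement~(1).

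In Case B every simple submodule of $V$ is isotropic, and the argument of Case B of Proposition~\ref{irrL} carries over verbatim to give $V = W \oplus W'$ as $\bC G$-modules with both summands simple of dimension $d/2$, $W = W^\perp$, and a non-zero (hence iso) $\bC G$-map $g : \vw \cdot W \to \overline{W'}^* \cong W'$. If $W \not\cong W'$ then $W \not\cong \vw \cdot W$; any $\bC G$-automorphism is block-diagonal $(\alpha \id_W, \beta \id_{W'})$ with $\alpha, \beta \in \bC \setminus 0$, and preservation of the off-diagonal hermitian form forces $\overline{\alpha}\beta = 1$, giving $\Aut_H(V) \cong \bC \setminus 0$ and statement~(2). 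If instead $W \cong W'$, I fix an iso $h: W \to W'$, transfer the H-structure to $\widetilde{V} = W \oplus W$, and set up $f_1, f_2 : \vw \cdot W \to \overline{W}^*$ as in~\eqref{ff_equation}. Both are non-zero, so $f_1 = \lambda f_2$ for some $\lambda \in \bC \setminus 0$; the computation from the linear case goes through with the conjugates supplied by $\vw$-hermitianness, yielding $|\lambda|^2 = 1$ in place of $\lambda^2 = 1$. Replacing $h$ by $rh$ for $r \in S^1$ multiplies the ratio $\lambda$ by $\overline{r}/r$, so a suitable $r$ arranges $\lambda = 1$. The diagonal $\{(w,w) : w \in W\} \subset \widetilde{V}$ is then $\bC G$-stable with restricted form $(w,v) \mapsto 2 f_1(v)(w)$ (the $W$-$W$ and $W'$-$W'$ contributions vanish by isotropy), which is non-degenerate because $f_1$ is an isomorphism, so Lemma~\ref{lemma_3corners} identifies it as a proper H-subrepresentation of $\widetilde{V} \cong V$, contradicting irreducibility.

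The principal obstacle lies in the elimination of the subcase $W \cong W'$: one has to trace the conjugates carefully to confirm that $|\lambda|^2 = 1$ falls out of $\vw$-hermitianness, that rescaling $h \mapsto rh$ indeed multiplies the ratio $\lambda$ by $\overline{r}/r$, and that the resulting form on the diagonal satisfies the non-degeneracy and coherence hypotheses of Lemma~\ref{lemma_3corners}.
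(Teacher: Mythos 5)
Your proposal is correct and follows essentially the same route as the paper: it transfers the proof of Proposition~\ref{irrL} mutatis mutandis, with the only substantive new point being the elimination of the $W\cong W'$ subcase via the ratio $\lambda\in S^1$ from \eqref{ff_equation}. The paper realises that elimination by taking the twisted diagonal $\{(\eta w,w)\}$ with $\eta^2=\lambda$, while you rescale $h$ by a unit $r$ (changing $\lambda$ by $r^{\pm 2}$) to make $\lambda=1$ and use the straight diagonal --- the same trick in a slightly different guise, and both exploit exactly the sesquilinear scaling observation stated before the proposition.
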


\begin{proof}
  The proof of \ref{irrL} remains true mutatis mutandis. The only major change is that when $V$ is the direct sum of two simple modules, $V = W \oplus W'$, necessarily $W \not\cong W'$.

  Indeed, suppose $h:W \xrightarrow{\cong} W'$ is an isomorphism.
  Construct an irreducible H-representation on $\widetilde{V}\coloneqq W \oplus W$ by transferring the structure via the $\C G$-module isomorphism $\id\oplus h : \widetilde{V} \rightarrow V$. 
  We have an isomorphism of H-representations as in \eqref{ff_equation} with $f_1 = \lambda f_2$ for some $\lambda \in S^1 \subseteq \bC$.
  Choose a square root $\eta \in S^1$,  $\eta^2 = \lambda$.
  Then $\langle (\eta w,w) \rangle_{\C}$ is an H-subrepresentation of $\widetilde{V}$, a contradiction.
\end{proof}

Unlike the linear theory, the underlying $\C G$-module does not always determine the H-representation.

\begin{cor}\label{determinetypeH}
Let $V$ be an irreducible H-representation. If $V$ is of type (1), then there are two non-isomorphic H-representations on $V\res{\C G}{}$ : these are $(V, B_{\vw})$ and $(V, - B_{\vw})$. If $V$ is of type (2), then $V$ is determined by $V \res{\C G}{}$. 
\end{cor}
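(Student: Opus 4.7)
The plan is to follow the pattern of the uniqueness corollary after Proposition \ref{irrL}, comparing two H-representation structures $B_\vw$ and $\widetilde{B}_\vw$ on the same $\bC G$-module via Schur's lemma applied to spaces of $\vw$-invariant sesquilinear forms, and then trying to absorb the resulting discrepancy by a $\bC G$-automorphism of the underlying module. The key difference from the linear theory is that a sesquilinear form rescales as $|\beta|^{2}$ (not $\beta^{2}$) under the scalar automorphism $\beta\,\id$, so some positive-scaling classes cannot be merged and a sign ambiguity survives in type (1) but is killed in type (2) by the extra summand.

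For type (1), the underlying simple $\bC G$-module $W$ carries a one-dimensional $\bC$-space of $\vw$-invariant sesquilinear forms, so by Schur's lemma $\widetilde{B}_\vw = \alpha B_\vw$ for some $\alpha \in \bC^{*}$. Imposing $\vw$-hermiticity on $\widetilde{B}_\vw$ gives $\alpha\,\overline{B_\vw(v,u)} = \overline{\alpha}\,\overline{B_\vw(v,u)}$, forcing $\alpha \in \bR^{*}$. An isomorphism of H-representations must be a $\bC G$-automorphism, and by Schur this is a scalar $\beta\,\id$; preservation of the forms then reads $|\beta|^{2}\alpha = 1$, which is solvable exactly when $\alpha > 0$. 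The set of isomorphism classes is therefore $\bR^{*}/\bR_{>0} = \{\pm 1\}$, represented by $B_\vw$ and $-B_\vw$; the same calculation shows these two are genuinely non-isomorphic, since an isomorphism would require $|\beta|^{2} = -1$.

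For type (2), write $V\res{\bC G}{} = W \oplus W'$ with $W \not\cong W'$ and $W \not\cong \vw \cdot W$; by the proof of Proposition \ref{irrH}, the form $B_\vw$ vanishes on $W \times W$ and $W' \times W'$, so it is determined by its restriction to $W \times W'$, its value on $W' \times W$ being fixed by $\vw$-hermiticity. Schur again gives a one-dimensional $\bC$-space of such restrictions, so $\widetilde{B}_\vw|_{W \times W'} = \alpha B_\vw|_{W \times W'}$ for some $\alpha \in \bC^{*}$. The group $\Aut_{\bC G}(V) \cong \bC^{*} \times \bC^{*}$ acts on this restricted form by $(\beta_1,\beta_2)\cdot B = \beta_{1}\overline{\beta_{2}}\,B$, and since $(\beta_1,\beta_2)\mapsto \beta_{1}\overline{\beta_{2}}$ surjects onto $\bC^{*}$, any $\alpha$ can be absorbed, yielding the desired isomorphism. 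The main bookkeeping obstacle is checking that matching the $W \times W'$ component automatically matches the $W' \times W$ component; this is immediate, since $\vw$-hermiticity forces $\widetilde{B}_\vw|_{W' \times W} = \overline{\alpha}\,B_\vw|_{W' \times W}$ and the equation $\beta_{1}\overline{\beta_{2}}\alpha = 1$ on the first component is the complex conjugate of the analogous equation on the second.
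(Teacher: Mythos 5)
Your proof is correct and follows essentially the route the paper intends (it states this corollary without a separate proof, relying on the rescaling discussion before Proposition~\ref{irrH} and the Schur-type argument from the corollary after Proposition~\ref{irrL}): Schur's lemma reduces two structures to a scalar multiple, and the key point that a sesquilinear form scales by $|\beta|^2$ rather than $\beta^2$ yields exactly the sign ambiguity in type (1) and, via the two-parameter automorphism group $\bC^*\times\bC^*$, no ambiguity in type (2). No gaps worth flagging, apart from the harmless convention choice of which argument of $B_\vw$ is conjugate-linear.
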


\subsection{Relationship with $\C\G$-Modules}

Table~\ref{indextwoinduct} summarises the relationship between simple modules of $\C G$ and $\C \G$.
There $V$ is a simple $\C \G$-module; $W$ is a simple submodule of $V \res{\C G}{}$. 
\begin{table}
\caption{Index 2 Induction and Restriction}
\label{indextwoinduct}
\begin{center}
\begin{tabular}{ |c|c|c| } 
 $V \res{}{}$ & $W$ & $W \oplus \vw \cdot W$\\ 
 \hline
 $W \ind{}{}$ & $V \oplus (V \otimes \pi)$ & $V$ \\ 
 \hline
 $W \cong \vw \cdot W$? & Yes & No \\ 
 \hline
 $V \cong V \otimes \pi$? & No & Yes  \\ 
\end{tabular}
\end{center} 
\end{table}

\begin{cor} \label{bijection_H}
  A $\C G$-module is extendible to a $\C \G$-module if and only if it is extendible to an H-representation.
  This gives a bijection between isomorphism classes of $\C \G$-modules and H-representations. 
\end{cor}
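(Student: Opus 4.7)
My plan is to prove the biconditional on extendibility directly, then derive the bijection by counting isomorphism classes. For the forward direction I would mimic Lemma~\ref{LRep_lemma}: given a $\bC\G$-module $(V,\rho)$, the finite group $\G$ admits a $\G$-invariant positive-definite hermitian form $\langle\cdot,\cdot\rangle$ on $V$ by the unitary trick. For each odd $\vw$ I set
\[
B_\vw(u,v) \coloneqq \langle u,\vw^{-1}v\rangle,
\]
which is sesquilinear and non-degenerate. The three conditions of Lemma~\ref{lemma_3corners} then reduce to $\G$-invariance of $\langle\cdot,\cdot\rangle$: $\vw$-invariance is immediate; $\vw$-hermiticity amounts to the identity $\langle u,\vw v\rangle = \langle\vw^{-1}u,v\rangle$; and the compatibility $B_{\vw_1}(u,v) = B_{\vw_2}(u,\vw_2\vw_1^{-1}v)$ collapses on both sides to $\langle u,\vw_1^{-1}v\rangle$. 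So the $\bC G$-module underlying $V$ extends to an H-representation.

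For the reverse direction I would decompose the given H-representation using Proposition~\ref{Maschke_th} and Corollary~\ref{Maschke_cor}, reducing to an irreducible H-representation $V$. Proposition~\ref{irrH} gives two cases: (1) $V\res{\bC G}{} = W$ is simple with $W\cong\vw\cdot W$, or (2) $V\res{\bC G}{} = W\oplus W'$ with $W\not\cong W'$ and $W\not\cong\vw\cdot W$, in which case the sesquilinear analog of the map~\eqref{hom_g} forces $W'\cong\vw\cdot W$. Column~1 of Table~\ref{indextwoinduct} then furnishes two non-isomorphic simple $\bC\G$-modules restricting to $W$ in case~(1), and column~2 furnishes a simple $\bC\G$-module restricting to $W\oplus\vw\cdot W$ in case~(2). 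Hence the underlying $\bC G$-module extends to a $\bC\G$-module.

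For the bijection I count isomorphism classes of irreducibles on each side, using unique decomposition. Let $a$ be the number of simple $\bC G$-modules $W$ with $W\cong\vw\cdot W$, and $b$ the number of unordered pairs $\{W,\vw\cdot W\}$ with $W\not\cong\vw\cdot W$. Table~\ref{indextwoinduct} accounts for $2a+b$ simple $\bC\G$-modules, while Proposition~\ref{irrH} together with Corollary~\ref{determinetypeH} accounts for $2a$ irreducible H-representations of type~(1) plus $b$ of type~(2). The totals coincide, and the bijection extends to all isomorphism classes via multiplicities. The main obstacle is the identification $W'\cong\vw\cdot W$ in case~(2): it is not spelled out in Proposition~\ref{irrH} but is required to invoke column~2 of Table~\ref{indextwoinduct}, and must be extracted by adapting the $B_\vw$-induced $\bC G$-homomorphism of~\eqref{hom_g} to the sesquilinear setting and then applying a simplicity argument.
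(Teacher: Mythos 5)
Your proposal is correct and follows essentially the paper's own route: the extension is produced by the hermitian analogue of formula~\eqref{LRep_structure} (your one-term form $B_\vw(u,v)=\langle u,\vw^{-1}v\rangle$ with a $\G$-invariant inner product works just as well), and the bijection rests on Proposition~\ref{irrH}, Corollary~\ref{determinetypeH} and Table~\ref{indextwoinduct}, exactly as in the paper's proof, which simply records in addition that $V\otimes\pi$ yields $(V,-B_\vw)$ so that the two extensions of a self-conjugate simple module match the two H-structures. The step you flag as the main obstacle, $W'\cong\vw\cdot W$ in type~(2), is a routine verification rather than a gap: it follows at once from the identity $\chi=\vw\cdot\chi$ noted at the start of Section~\ref{s3} (or from your proposed sesquilinear adaptation of~\eqref{hom_g}).
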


\begin{proof}
  The bijection is given by the formula~\eqref{LRep_structure}.
  Note that if a $\C \G$-module $V$ defines $(V,B_{\vw})$, then $V \otimes \pi$ defines $(V,-B_{\vw})$.
  Finally, use Corollary~\ref{determinetypeH}.
\end{proof}

Notice that the bijection in Corollary~\ref{bijection_H} is natural in type~(2) but not in type~(1). In type~(1), $(V,B_\vw)$ can correspond to either $V$ or $V\otimes\pi$. Thus, we can talk about a natural $C_2^p$-torsor of bijections.

\subsection{Intermediate Category and Functors}\label{int_2}
Fix representatives of each isomorphism class of irreducible $\C \G$-modules: $V_1$, $V_1 \otimes \pi$, \ldots , $V_p$, $V_p \otimes \pi$, $U_1$, ... , $U_q$. Here $U_i \otimes \pi \cong U_i$ and $V_i \otimes \pi \not\cong V_i$.

We turn these into irreducible H-representations as in Lemma~\ref{LRep_lemma}, by choosing a $G$-invariant  hermitian form $\langle \cdot, \cdot \rangle$ on each. As in Section~\ref{int_1}, consider the skeletons of $\sH(G)$ and $\sR(G)$
consisting of all finite direct sums of the fixed irreducible modules.
Define {\em the intermediate category}  $\sR^{\ast}(G)$ on this skeleton: morphisms in $\sR^{\ast}(G)$ are those morphisms in $\sR(G)$ that preserve the chosen hermitian form.

We have the following topological (enriched in $\sT$) functors
\begin{equation} \label{functors_RH}
  \sH(G) \xleftarrow{\Phi} \sR^\ast (G) \xrightarrow{\Psi} \Mon (\sR(G)) \, ,
\end{equation}  
both of which are essentially surjective on objects.  Hence, as before, to prove parts (iii) and (iv) of Theorem~\ref{main_theorem}, it suffices to show that the functors are homotopy equivalences on morphisms.

\subsection{Proof of Part (iii) of Theorem~\ref{main_theorem}}\label{proof_part_3}

In the proof of part (i) it suffices to establish both homotopy equivalences for $nV$, where $V$ is an irreducible L-representation.
Here, in light of \ref{determinetypeH}, we instead need to show that both $\Phi(U)$ and $\Psi(U)$ are homotopy equivalences for $U = nU_i$ or $U = n V_i \oplus m (V_i \otimes \pi)$, as there are no $\C G$-morphisms between such $U$ for distinct $U_i$ or $V_i$.

The result is summarised in Table~\ref{table_iii}: $\Phi(U)$ and $\Psi(U)$ are always homotopy equivalences between a connected Lie group and its maximal compact subgroup.
\begin{table}
\caption{Homotopy Equivalences for Part (iii)}
\label{table_iii}
\begin{center}
\begin{tabular}{ |c|c|c|c| }
$U$  & $\Aut_H (U)$ & $\sR^\ast (G)(U,U)$ & $\Aut_R (U)$\\
\hline  
$n V_i \oplus m (V_i \otimes \pi)$  & $\U_{n,m}(\bC)$ & $\U_{n}(\bC) \times \U_{m}(\bC)$ & $\GL_n (\bC) \times \GL_m (\bC)$ \\
$n U_i$  & $\GL_n (\bC)$ & $\U_n (\bC)$ & $\GL_n (\bC)$ \\
\end{tabular}
\end{center}
\end{table}

{\bf Case 1:} Let $U = n V_i \oplus m (V_i \otimes \pi)$. The final two columns are standard classical results. Now let us show that $\Aut_H(U) = \U_{n,m}(\bC)$. 

The group $\Aut_H(U)$ is a subgroup of $\Aut_{\bC G}(U) = \GL_n(\C) \times \GL_m(\C)$.
It consists of $M\in \GL_n(\C) \times \GL_m(\C) \subseteq \GL_{dn}(\C) \times \GL_{dm}(\C)$ subject to the extra condition~\eqref{cond_c1} as before, as $M\in\Aut_{\bC G}(U)$ has
$M=(M_{i,j})_{(n+m) \times (n+m)}$, where $M_{i,j}=\alpha_{i,j}\id_d$, $\alpha_{i,j}\in\bC$.

In the basis as above, the matrix $B^{U}(\vw)$ is block-diagonal with $n$ square blocks: $B^{U}(\vw) = \text{diag}(B^{V_i}(\vw), ... , B^{V_i}(\vw), -B^{V_i}(\vw), ... , -B^{V_i}(\vw)) = (I_n \oplus -I_m)\: \text{diag}(B^{V_i}(\vw))$. Then $B^{U}(\vw) M = (I_n \oplus -I_m) \: \text{diag}(B^{V_i}(\vw)) M = (I_n \oplus -I_m)M \: \text{diag}(B^{V_i}(\vw))$, hence the condition~\eqref{cond_c1} is equivalent to $M \in \U_{n,m}(\C)$.
Therefore, $\Phi (U)$ is a homotopy equivalence. 

{\bf Case 2:} $U = n U_i$. The proof is identical to that of case 2 of the proof of part (i), with appropriate transposes changed to conjugate-transposes. \qed

\subsection{Proof of Part (iv) of Theorem~\ref{main_theorem}}\label{proof_part_4}

This is a continuation of the previous section. The proof reduces to showing that the embeddings of hom-spaces
\[
\sH(G)(U,U') \xleftarrow{\Phi(U,U')} \sR^\ast (G)(U,U') \xrightarrow{\Psi(U,U')} \Mon (\sR(G))(U,U'),
\]
are homotopy equivalences, where $(U,U')$ are in the following two cases.

{\bf Case 1:} $(U,U') = (n V_i \oplus m (V_i \otimes \pi) , n' V_i \oplus m' (V_i \otimes \pi))$ for $n \leq n', m \leq m'$. Replicating an orthonormal basis of $V_i$ to $nV_i$ and $V_i \otimes \pi$ to $m(V_i \otimes \pi)$ identifies $\sR^\ast (G)(U,U')$ on each isotypical component with a product of complex Stiefel manifolds.
The space $\Mon (\sR(G))(U,U')$ identifies with the product of the spaces of injective map $\bC^n\rightarrow\bC^{n'}$ and $\bC^m\rightarrow\bC^{m'}$. 
By the Gram-Schmidt argument, $\Psi(U,U')$ is a homotopy equivalence.

Similarly to case 3 of Section~\ref{proof_part_2}, $\Phi(U,U')$ is the natural embedding of homogeneous spaces, hence, a homotopy equivalence by Lemma~\ref{hom_eq}:

\[
\begin{tikzcd}
    \sR^{\ast }(G)(U, U') \arrow{r}{\Phi(U,U')} \arrow[d, phantom, "\rotatebox{-90}{$\cong$}"] & \sH(G)(nV,kV) \arrow[d, phantom, "\rotatebox{-90}{$\cong$}"] \\
    \U_{n'}(\bC) \times \U_{m'}(\bC)/\U_{n}(\bC) \times \U_{m}(\bC)  \arrow[r] & \U_{n',m'}(\bC)/\U_{n,m}(\bC).
  \end{tikzcd}
\]

{\bf Case 2:} $(U,U') = (n U_i, m U_i)$ for $n < m$. This is identical to case 2 of Section~\ref{proof_part_2}: both $\Psi(U,U')$ and $\Phi(U,U')$ are (different) natural embeddings of complex Stiefel manifolds. \qed

\section{Generalisations} \label{s4}
\subsection{Compact Groups}
The antilinear theory works equally well for a compact group $G$ instead of a finite group \cite{NEEB}.
In particular, all the results of our earlier paper \cite{AREP} remain valid under these assumptions.
Since irreducible continuous representations of compact groups are finite-dimensional, the results of the present paper remain valid as well.

\subsection{Infinite Dimensional Spaces}
It is subtle to replace a compact group with a locally compact group.
The antilinear irreducible representations are no longer finite-dimensional.
On the other hand, the linear and hermitian theory require a stronger form of duality, available only for finite-dimensional vector spaces.
One way around it is to consider Hilbert spaces and unitary representations instead.
It is interesting to investigate which of the results of the present paper would still hold in this case.

\subsection{General Coefficients}
Another interesting project, worth further attention, is to replace $\bC$ with a more general field or even a ring, as we have done in Section~\ref{s1.3}.
The antilinear theory (at least over a field) is probably attainable \cite{AREP}, although it will depend on a classification of graded division rings.
The formulation of the main results of the present paper will require some version of homotopy theory of schemes (cf. \cite{Voevod}). 

\subsection{General Gradings}
Suppose that $G \leq \G$ is a more general grading such that the quotient $\G/G$ is a Galois group of a field extension $\bF\leq\bK$.
The antilinear theory in this set-up is clear: these are just representations of the skew group ring $\bK\ast\G$.
It would be still interesting to develop the theory in full, in the spirit of \cite{AREP}. 
It is not clear to us how to approach the linear and hermitian theories in this context. 

\bibliography{lreps}{}
\bibliographystyle{plain}

\end{document}